\newtheorem{theorem}{Th\'eor\`eme}\numberwithin{theorem}{section}\newtheorem{corollary}[theorem]{Corollaire}\newtheorem{conjecture}[theorem]{Conjecture}\newtheorem{lemma}[theorem]{Lemme}\theoremstyle{definition}\newtheorem{remark}[theorem]{Remarque}\newtheorem{definition}[theorem]{D\'efinition}\numberwithin{equation}{section}
\newfont{\cyrm}{wncysc10}\newcommand{\sha}{\text{\cyrm{X}}}\newfont{\cyrma}{wncysc10 at 8pt}\newcommand{\shaa}{\text{\cyrma{X}}}
\begin{document}
\title[D\'ecompte dans une conjecture de Lang]{D\'ecompte dans une conjecture de Lang\\ sur les corps de fonctions : cas des courbes}
\author{Amílcar Pacheco}
\address{Universidade Federal do Rio de Janeiro, Instituto de Matemática, rua Alzira Bran\-dão 355/404, Tijuca, 20550-035 Rio de Janeiro, RJ, Brasil.}
\email{amilcar@acd.ufrj.br}
\author{Fabien Pazuki}
\address{Department of Mathematical Sciences, University of Copenhagen,
Universitetsparken 5, 
2100 Copenhagen \O, Denmark
et 
Institut de Math\'ematiques de Bordeaux, Universit\'e de Bordeaux,
351, cours de la Lib\'eration, 
33405 Talence, France.}
\email{fabien.pazuki@math.u-bordeaux.fr}
\thanks{Le premier auteur a \'et\'e partiellement soutenu par la bourse de recherche CNPq 306045/2013-3. Le second auteur a \'et\'e partiellement soutenu par ANR-10-BLAN-0115 Hamot et par ANR-10-JCJC-0107 Arivaf et est d\'esormais soutenu par la Chaire Niels Bohr DNRF de Lars Hesselholt et l'ANR-14-CE25-0015 Gardio.}
\date{\today}

\begin{abstract}
On se donne une courbe $X$ de genre $d$ sup\'erieur ou \'egal \`a $2$ d\'efinie sur un corps global de fonctions $K$ en caract\'eristique $p>0$ avec $p>2d+1$. On suppose que cette courbe est non-isotriviale. On se donne un sous-groupe $\Gamma$ de $J(K_s)$, o\`u $J$ est la jacobienne de $X$ et $K_s$ une cl\^oture s\'eparable de $K$, v\'erifiant $\Gamma/p \Gamma$ fini. Alors on montre que $X\cap \Gamma$ est de cardinal fini et born\'e par un majorant explicite. Ceci g\'en\'eralise un r\'esultat de Buium et Voloch.\medskip

\noindent\textbf{Abstract.} -- Let $X$ be a non-isotrivial curve of genus $d\geq2$ defined over a global function field $K$ of characteristic $p>0$ with $p>2d+1$. Let $\Gamma$ be a subgroup of $J(K_s)$, where $J$ is the Jacobian variety of $X$ and $K_s$ is a separable closure of $K$, such that $\Gamma/p\Gamma$ is finite. We show that $X\cap \Gamma$ is finite and provide an explicit bound on the number of elements in this intersection. It generalizes a result of Buium and Voloch.
\end{abstract}
\maketitle

\section{Introduction}
Soit $p$ un nombre premier et $n\geq1$ un entier naturel. Soit $k$ un corps fini \`a $q=p^n$ \'el\'ements, $K/k$ un corps de fonctions en une variable et $X/K$ une courbe lisse, compl\`ete et g\'eom\'etriquement connexe de genre $d\ge2$. On dit que $X$ est \emph{isotriviale} s'il existe une extension finie $l$ de $k$, une courbe lisse, compl\`ete et g\'eom\'etriquement connexe $X_0$ d\'efinie sur $l$ et une extension commune $L$ de $l$ et $K$ telle que $X\times_KL\cong X_0\times_lL$. Prenons $X$ non isotriviale. D'apr\`es un r\'esultat de Samuel (prolongeant un th\'eor\`eme de Grauert \cite{Gra}, lequel traite de corps de fonctions en caract\'eristique nulle), l'ensemble $X(K)$ de points $K$-rationnels de $X$ est fini (\textit{confer} \cite{Sam}). 

Notons par $J$ la vari\'et\'e jacobienne de $X$ et par $K_s$ une cl\^oture s\'eparable de $K$. Soit $\Gamma$ un sous-groupe de $J(K_s)$ tel que $\Gamma/p\Gamma$ soit fini. Dans un article ant\'erieur, sous l'hypoth\`ese additionnelle que $X$ n'est pas d\'efinie sur $K^p$, Buium et Voloch ont obtenu une borne sup\'erieure explicite pour le nombre de points dans l'intersection $X\cap\Gamma$, o\`u $X$ est plong\'ee dans $J$ par $\imath:X\hookrightarrow J$ (\textit{confer} \cite{BuVo}). Notons que la condition que $X$ n'est pas d\'efinie sur $K^p$ implique que $X$ ne peut pas \^etre isotriviale.

Soit $(\tau,B)$ la $K/k$-trace de $J$. Rappelons que $(\tau,B)$ est un objet final de la cat\'egorie de paires $(\sigma,A)$, o\`u $A$ est une vari\'et\'e ab\'elienne d\'efinie sur $k$ et en notant $A_K=A\times K$ on demande que $\sigma:A_K\to J$ soit un $K$-homomorphisme entre vari\'et\'es ab\'eliennes. De plus, il existe une \emph{$K/k$-sous-vari\'et\'e ab\'elienne maximale} $J_1$ de $J$ telle que $\tau:B_K\to J$ induit une isog\'enie $\tau_1:B_K\to J_1$. Observons que de ce fait, nous concluons que $J_1$ a partout bonne r\'eduction. Par ailleurs, nous avons aussi que $\mathrm{Tr}_{K/k}(J/J_1)=0$.

Le corps $K$ est le corps de fonctions d'une courbe lisse, compl\`ete, g\'eom\'etriquement connexe $\mathcal C$ d\'efinie sur $k$.

Soit $B_{\mathcal C}=B\times_k\mathcal C$, on dit que ce sch\'ema ab\'elien est un \emph{sch\'ema iso-constant}. Soit $\mathcal J_1/\mathcal C$ le mod\`ele de N\'eron de $J_1/K$ sur $\mathcal C$. L'application $\tau$ induit un homomorphisme de $\mathcal C$-sch\'emas ab\'eliens $\tilde\tau_1:B_{\mathcal C}\to\mathcal J_1$. Soit $d_\tau$ le degr\'e de la diff\'erente de $\tilde\tau_1$ (\emph{confer} \cite[page 203]{Ra}). Observons que par la remarque \ref{remarkdegree}, nous avons $d_{\tau}\le p^{2d_0}\le p^{2d}$, o\`u $d_0=\dim(B)\le d=\dim(J)$.

Soit maintenant $e\ge0$ le plus grand entier tel que $X$ soit d\'efinie sur $K^{p^e}$, mais pas sur $K^{p^{e+1}}$. Notons $\mathcal U$ la sous-courbe affine de $\mathcal C$ o\`u $X$ a partout bonne r\'eduction.\footnote{Rappelons que cela signifie qu'il existe un ensemble fini de points $\mathcal S$ de $\mathcal C$ o\`u $X$ a r\'eduction singuli\`ere, on prend $\mathcal U=\mathcal C\setminus\mathcal S$.}  L'entier $p^e$ correspond au degr\'e d'ins\'eparabilit\'e de l'application $j:\mathcal U\to\mathscr M_d$, o\`u $\mathscr M_d$ d\'esigne l'espace de modules fin des courbes de genre $d$. Le but de cet article est d'\'etendre le r\'esultat de Buium et Voloch, pr\'esent\'e dans la section suivante et d\'emontr\'e dans \cite{BuVo}, au cas o\`u $X$ peut \^etre d\'efinie sur un corps du type $K^{p^n}$ pour un entier $n\ge1$. En effet, on traite le cas maximal, \textit{i.e.} celui o\`u $X$ est d\'efinie sur $K^{p^e}$.

Soit $E_\Gamma$ la plus petite extension alg\'ebrique de $K$ telle que les points de $X\cap\Gamma$ soient rationnels sur $E_\Gamma$. Cette extension est en r\'ealit\'e une extension finie de $K$ (voir sous-section 5.2). Donc, elle correspond \`a un corps de fonctions sur $k$, et on note $g_\Gamma$ son genre. Le but de ce travail est de d\'emontrer le th\'eor\`eme suivant :

\begin{theorem}\label{papa principal}
Soit $X$ une courbe lisse, compl\`ete, g\'eom\'etriquement connexe de genre $d\ge2$ d\'efinie sur $K$ et non isotriviale. Soit $e$ l'entier naturel tel que $X$ est d\'efinie sur $K^{p^e}$ mais pas sur $K^{p^{e+1}}$. Soit $\Gamma$ un sous-groupe de $J(K_s)$ tel que $\Gamma/p\Gamma$ soit fini. Supposons de plus que $p>2d+1$. L'intersection de $X\cap\Gamma$ est finie, de cardinal born\'e de la mani\`ere suivante :
$$\#(X\cap\Gamma)\le C_{\mathrm{BV}}\cdot c_6^e,\quad\text{o\`u}$$
$$\begin{aligned}
C_{\mathrm{BV}}&=\#(\Gamma/p\Gamma)\cdot (3p)^d\cdot (8d-2)\cdot (d!),\\
c_6&=2d\cdot q^{g_\Gamma-1+p\cdot c_5},\\ 
c_5&=[E_\Gamma:K]\cdot\left(p^e\cdot\left(\frac{d}2\cdot(2g_\Gamma+f_{\mathcal X/\mathcal C})+p^{2d}\right)+d\cdot2^{4d^2}\cdot f_{\mathcal X/\mathcal C}\right).
\end{aligned}$$
Si l'extension $E_\Gamma/K$ est mod\'erement ramifi\'ee, alors on peut remplacer $c_5$ par une borne $c_{5,t}$, ainsi que $c_6$ par $c_{6,t}$, qui ne d\'ependent plus de $g_\Gamma$ (voir remarque \ref{remtame}). Si $\mathrm{Tr}_{K/k}(J)=0$, on peut effacer le facteur $p^{2d}$. Le terme $f_{\mathcal X/\mathcal C}$ est le conducteur d'un mod\`ele $\phi:\mathcal X\to\mathcal C$ de $X/K$ sur $\mathcal C$ (voir le paragraphe \ref{conducteur d'une courbe}).
\end{theorem}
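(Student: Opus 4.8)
The plan is to reduce the general case $e\ge 0$ to the situation treated by Buium--Voloch, in which the curve is not defined over $K^p$, and then to control quantitatively the purely inseparable descent that separates the two cases. First I would handle the reduction modulo $p\Gamma$: since $\Gamma/p\Gamma$ is finite, I write $X\cap\Gamma$ as a union over the $\#(\Gamma/p\Gamma)$ cosets of $p\Gamma$, which accounts for the first factor of $C_{\mathrm{BV}}$. On each coset one applies the differential-algebraic machinery of \cite{BuVo}. Recall that their argument exploits a derivation on $K$ (equivalently the Gauss--Manin/Kodaira--Spencer data of the family $\mathcal{X}\to\mathcal{C}$): when $X$ is not defined over $K^p$ the Kodaira--Spencer map is nonzero, which produces a nonzero Manin-type map on $J$ whose restriction to $X$ cuts $X\cap\Gamma$ out of a proper closed subscheme. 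A B\'ezout-type estimate for the degree of this locus in the relevant jet space yields the factor $(3p)^d\cdot(8d-2)\cdot(d!)$, and the hypothesis $p>2d+1$ enters precisely here to guarantee separability and nondegeneracy of the maps involved.

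The heart of the generalization is the inseparable descent. Since $X$ is defined over $K^{p^e}$ but not over $K^{p^{e+1}}$, I would write $X\cong Z^{(p^e)}$, the $p^e$-th Frobenius twist of a curve $Z/K$ that is \emph{not} defined over $K^p$, with a purely inseparable relative Frobenius $F^e:Z\to X$ of degree $p^e$ and correspondingly $J=J_Z^{(p^e)}$, together with a Verschiebung $V^e$ satisfying $V^e\circ F^e=[p^e]$. The Buium--Voloch theorem then applies to $Z$, but one must transport $\Gamma\subseteq J(K_s)$ back along $F^e$ and, crucially, bound how the cardinality of the relevant point set grows when Frobenius is undone. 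A $K_s$-point of $X\cap\Gamma$ corresponds under $F^e$ to a point of $Z$ defined only over a purely inseparable extension, so counting these preimages requires a Weil-type estimate on the curve whose function field is $E_\Gamma$: the number of rational points over $\mathbb{F}_{q^m}$ is governed by $2d\cdot q^{g_\Gamma-1+\cdots}$ (the coefficient $2d=2g$ being the Weil term), which is exactly the shape of $c_6$. Iterating the descent through the $e$ successive Frobenius steps, each multiplying the relevant field degree by $p$, produces the factor $c_6^{\,e}$ and the factor $p$ in the exponent of $c_6$.

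The quantity $c_5$ in the exponent should emerge as an explicit genus/degree bound for the auxiliary cover controlling the $E_\Gamma$-rational preimages, obtained from Riemann--Hurwitz together with the Grothendieck--Ogg--Shafarevich formula: the terms $\tfrac{d}{2}(2g_\Gamma+f_{\mathcal{X}/\mathcal{C}})$ and $d\cdot 2^{4d^2}\cdot f_{\mathcal{X}/\mathcal{C}}$ are the ramification and conductor contributions, while the $p^{2d}$ term is the contribution of the $K/k$-trace through the different degree $d_\tau\le p^{2d}$ of $\tilde\tau_1$ (which is why it disappears when $\mathrm{Tr}_{K/k}(J)=0$). In the tamely ramified case the wild contributions vanish and one obtains the simplified bounds $c_{5,t},c_{6,t}$ independent of $g_\Gamma$. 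Along the way I would need the finiteness of $E_\Gamma/K$ so that $g_\Gamma$ and the conductor are well-defined invariants to propagate through the descent.

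I expect the main obstacle to be exactly this inseparable descent step: the Buium--Voloch differential argument \emph{degenerates} the moment $X$ becomes a $p$-th power, since the Manin map then vanishes identically, so the genuinely new content is to quantify the failure of injectivity of $F^e$ on $E_\Gamma$-rational points and to make the Weil and Riemann--Hurwitz estimates effective and uniform enough to be iterated $e$ times without losing explicitness. Tracking $E_\Gamma$, its genus $g_\Gamma$, and the conductor through each Frobenius stage, and verifying that the per-step growth is controlled by a single factor $c_6$, is where I anticipate the real difficulty lies.
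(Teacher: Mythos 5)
Your overall skeleton matches the paper's: write $X$ as the $p^{e}$-th Frobenius twist of a curve not defined over $K^{p}$, apply Buium--Voloch to that bottom curve with a subgroup of $F^{-e}(\Gamma)$, and pay a multiplicative cost $c_{6}$ at each of the $e$ descent steps; you also correctly locate the roles of the abc theorem, the conductor inequalities and the trace term $p^{2d}$ inside $c_{5}$. However, the mechanism you propose for the factor $c_{6}$ is not the right one, and it is the heart of the argument. The quantity $2d\cdot q^{g_\Gamma-1+p\cdot c_{5}}$ is not a Weil-type count of rational points on the curve $\mathcal C_\Gamma$ (note in passing that $2d$ is $2\dim J$, not $2g_\Gamma$). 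What the paper bounds at each step is the index of the image of the relative Frobenius on $E_\Gamma$-rational points of the Jacobians, via
\[
\#(X\cap\Gamma)\le\#(X_1\cap\Gamma_1)\cdot\bigl(J(E_\Gamma):F(J_1(E_\Gamma))\bigr)\le\#(X_1\cap\Gamma_1)\cdot\#\mathrm{Sel}_{J_{E_\Gamma}}(E_\Gamma,F),
\]
and it is the order of the Selmer group of the isogeny $F$ over $E_\Gamma$ that is majorized by $c_{6}$. Your phrase about quantifying ``the failure of injectivity of $F^{e}$ on $E_\Gamma$-rational points'' points in the wrong direction: a purely inseparable isogeny is injective on points; what must be controlled is the failure of \emph{surjectivity} of $F$ on $E_\Gamma$-points of the Jacobian, i.e.\ the cokernel $J(E_\Gamma)/F(J_1(E_\Gamma))$, which is exactly what the Selmer group of $F$ measures.

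Your proposal contains no substitute for the computation of this Selmer group, which occupies Sections 5 and 6 of the paper: one uses the structure $\ker F\cong\mu_p^{\oplus a}\oplus G$ with $G$ local-local filtered by copies of $\alpha_p$, computes the local Selmer groups in flat cohomology in the cases of potentially good and potentially semi-abelian reduction, glues them into an injection $\mathrm{Sel}(E_\Gamma,F)\hookrightarrow H^0(\mathcal C_\Gamma,\Omega^1_{\mathcal C_\Gamma}(D))^{\oplus 2d}$ by means of the Cartier operator (logarithmic differentials for the $\mu_p$-part, exact differentials for the $\alpha_p$-part), and then bounds $\deg D$ by $\bar s+p\cdot h_{\mathrm{dif}}(J/E_\Gamma)$ and applies Riemann--Roch; the abc theorem enters only at this last stage, to bound $h_{\mathrm{dif}}$ by conductor data. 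Without this flat-cohomology input the descent step has no quantitative content, so the proof as proposed does not close.
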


\begin{remark}\label{rem1}
La d\'ependance en les param\`etres de d\'epart est partag\'ee entre les donn\'ees relatives au corps de base $p, q, g$, les donn\'ees relatives \`a la courbe $d, e, f_{\mathcal X/\mathcal C}$. 

Le point central ici est le fait qu'on n'impose pas $e=0$. La pr\'esence du degr\'e $[E_\Gamma:K]$, qui d\'epend bien sûr de $\Gamma$, mais aussi de $X$, est peut-\^etre superflue. C'est plutôt une cons\'equence de la m\'ethode de d\'emonstration du th\'eor\`eme. Pour passer de l'hypoth\`ese de Buium et Voloch \cite{BuVo}, \emph{i.e.}, que $X$ n'est pas d\'efinie sur $K^p$ \`a l'hypoth\`ese moins restrictive que $X$ est non isotriviale, on a besoin de faire une descente par Frobenius en caract\'eristique $p$. L'un des outils employ\'es pour ce faire est un groupe de Selmer qui a besoin d'un corps de rationalit\'e des points pour être d\'efini, dans ce cas $E_\Gamma$.

Nous fixons un mod\`ele $\mathcal X/\mathcal C$ de $X/K$ sur $\mathcal C$. Son conducteur est not\'e par  $f_{\mathcal X/\mathcal C}$ (voir la section suivante). Comme analys\'e dans la remarque 1.2 page 3 de \cite{PaPa}, la borne doit d\'ependre de $f_{\mathcal X/\mathcal C}$. Le conducteur $f_{\mathcal X/\mathcal C}$ d\'epend du mod\`ele $\phi:\mathcal X\to\mathcal C$ de $X/K$.

Enfin la borne doit d\'ependre de $\Gamma$. En effet si $(K_m)_{m\in{\mathbb{N}}}$ est une tour d'extensions s\'eparables de $K$ et telle que $K_m$ reste un corps de fonctions en une variable sur $k$ \`a chaque \'etage $m$, en posant $\Gamma_m=J(K_m)$ (qui v\'erifie bien que $\Gamma_m/p\Gamma_m$ est fini), on obtient que $\#(X\cap \Gamma_m)=\#X(K_m)$ est fini pour tout $m$, mais ce cardinal tend vers l'infini avec $m$. Pour obtenir de tels $K_m$, il suffit de choisir $K_m=K(J[\ell^m])$, l'extension galoisienne de $K$ engendr\'ee par les points de $\ell^m$-torsion de $J$, o\`u $\ell\ne p$ est un nombre premier.
\end{remark}

\begin{remark}
En comparaison avec l'article ant\'erieur \cite{PaPa}, la borne pr\'e\-sen\-t\'ee ici est plus fine que celle obtenue sur le cardinal de $X(E_\Gamma)$, la diff\'erence se situant dans la partie de la borne not\'ee $C_{\mathrm{BV}}$ (qui provient de \cite{BuVo}). Dans le cas trait\'e dans \cite{PaPa} la quantit\'e $C_{BV}$ d\'epend du rang de la vari\'et\'e jacobienne $J$ de $X$ sur $E_\Gamma$. Dans le cas pr\'esent elle d\'epend du cardinal du groupe $\Gamma/p\Gamma$. La situation pr\'esente nous permet ainsi de traiter le cas d'autres groupes $\Gamma$ sans pour autant devoir imposer $e=0$, hypoth\`ese qui \'etait faite dans \cite{BuVo}.

 La borne pr\'esent\'ee dans le th\'eor\`eme \ref{papa principal} permet de plus d'obtenir les bornes des corollaires \ref{ManinMumford} et \ref{dynamique} ci-apr\`es.

Si $\Gamma$ est de type fini, on pourra rendre rationnels un nombre fini de g\'en\'erateurs de $\Gamma$, ainsi $E_{\Gamma}$ ne d\'ependra que de $\Gamma$ et pas de son intersection avec $X$.
\end{remark}

\begin{remark}
L'hypoth\`ese $p> 2d+1$ est due \`a l'utilisation du \cite[th\'eor\`eme 5.3]{HiPa}. Cette hypoth\`ese a une triple utilit\'e dans le travail cit\'e. D'abord, si $A$ est une vari\'et\'e ab\'elienne, elle implique que l'extension $K(A[\ell])/K$ est mod\'er\'ement ramifi\'ee (cette extension est engendr\'ee par les coordonn\'ees des points de $\ell$-torsion de $A$, pour un premier $\ell\ne p$, voir \cite{gr}). D'autre part elle implique que le conducteur sauvage de $A/K$ est nul (voir \cite{serre}). De plus la vari\'et\'e ab\'elienne $A$ a partout r\'eduction semi-ab\'elienne sur $K(A[\ell])$ (voir \cite{gr}). Le th\'eor\`eme 5.3 de \cite{HiPa} se situe exactement dans ce cadre, car en partant du sch\'ema semi-ab\'elien universel (contenu dans une compactification bien choisie de l'espace de modules de vari\'et\'es ab\'eliennes principalement polaris\'ees avec une structure de niveau convenable), on construit un mod\`ele semi-ab\'elien $\psi:\mathcal B\to\mathcal C$ de $A/K$ sur $\mathcal C$, dont les diff\'erentielles sont images inverses de celles du sch\'ema semi-ab\'elien universel. C'est ainsi que fonctionne la preuve du th\'eor\`eme cit\'e (voir \cite[Theorem 3.1]{esvi}). Il appara\^it donc difficile de se passer de cette hypoth\`ese.
\end{remark}

\begin{remark}
Avant de passer aux corollaires, nous attirons l'attention du lecteur sur le fait que la section 4 propose la preuve d'une in\'egalit\'e abc pour les vari\'et\'es ab\'eliennes en caract\'eristique $p>0$ qui constitue une g\'en\'eralisation non triviale du r\'esultat ant\'erieur de \cite{HiPa}. Ce r\'esultat sera sans doute utile \`a d'autres endroits dans l'\'etude de l'arithm\'etique des vari\'et\'es ab\'eliennes sur un corps de fonctions sur un corps fini. Voici son \'enonc\'e.
\end{remark}

\begin{theorem}\label{thmabc}
Soit $A/K$ une variété abélienne non constante de dimension $d$. Supposons que $p>2d+1$. Soit $\bar s$ le nombre de points g\'eom\'etriques de $\mathcal C$ o\`u $A/K$ a mauvaise r\'eduction. Alors,
$$h_{\mathrm{dif}}(A/K)\le p^e\cdot\left(\frac{d-d_0}2\cdot(2g-2+\bar s)+p^{2d}\right)+d\cdot2^{4d^2}\cdot\bar s,$$
et \textbf{a fortiori} (\textbf{cf.} remarque \ref{rems}),
$$h_{\mathrm{dif}}(A/K)\le p^e\cdot\left(\frac{d-d_0}2\cdot(2g-2+f_{A/K})+p^{2d}\right)+d\cdot2^{4d^2}\cdot f_{A/K}.$$
Si la $K/k$-trace de $A$ est nulle, on peut effacer le terme $p^{2d}$ de la borne.
\end{theorem}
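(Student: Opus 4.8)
The plan is to realise the differential height $h_{\mathrm{dif}}(A/K)$ as the degree of a Hodge-type line bundle pulled back along a modular map, and then to separate its three contributions (geometric, iso-constant, bad reduction) by a Frobenius descent. Concretely, the hypothesis $p>2d+1$ ensures, after the tamely ramified base change to $K(A[\ell])$, that $A$ acquires semi-abelian reduction with vanishing wild conductor; this is what allows the construction, as in the proof of \cite[th\'eor\`eme 5.3]{HiPa} and \cite[Theorem 3.1]{esvi}, of a semi-abelian model $\psi\colon\mathcal B\to\mathcal C$ whose sheaf of invariant differentials is the pullback along a map $j\colon\mathcal C\to\overline{\mathscr A}_d$ of the Hodge bundle $\omega$ on a suitable compactified moduli space of principalement polaris\'ees abelian varieties. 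With this normalisation one has $h_{\mathrm{dif}}(A/K)=\deg\bigl(j^*\omega\bigr)$, up to the local correction at the bad places addressed at the end.

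The new input, by contrast with \cite{HiPa}, is to dispense with the assumption that $A$ is not defined over $K^p$, \emph{i.e.} that $e=0$. I would exploit the fact, recalled in the introduction, that $p^e$ is exactly the inseparability degree of $j$. Untwisting by the $e$-fold Frobenius produces an abelian variety $A_0/K$, not defined over $K^p$, with $A\cong A_0^{(p^e)}$, and factors the modular map as $\mathcal C\xrightarrow{\mathrm{Frob}^e}\mathcal C'\xrightarrow{j_0}\overline{\mathscr A}_d$ with $j_0$ s\'eparable. Since $\mathrm{Frob}^e$ has degree $p^e$, pullback along it multiplies the degree of any line bundle by $p^e$, so that the purely modular part of $h_{\mathrm{dif}}(A/K)$ is $p^e$ times that of $A_0$. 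This is the origin of the global factor $p^e$ in the bound.

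It then remains to bound the height of the s\'eparable part and to split off the $K/k$-trace. Only the maximal non-isoconstant quotient, of dimension $d-d_0$, yields a non-constant map to the moduli space; the image of the trace $B$ is a single point. Applying the in\'egalit\'e of Esnault--Viehweg type \cite{esvi} to this $(d-d_0)$-dimensional part produces the geometric contribution $\tfrac{d-d_0}{2}\,(2g-2+\bar s)$, where $2g-2+\bar s$ is the logarithmic degree of the relevant sheaf on $\mathcal C$. The iso-constant part contributes nothing to this modular degree, but matching the universal semi-abelian model to the mod\`ele de N\'eron $\mathcal J_1$ of the trace forces the diff\'erente of $\tilde\tau_1$ to intervene; by remarque \ref{remarkdegree} this is bounded by $d_\tau\le p^{2d}$, which is the source of the summand $p^{2d}$ and explains why it may be deleted exactly when $\mathrm{Tr}_{K/k}(A)=0$.

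The final, and I expect the most delicate, step is the comparison between the semi-abelian model and the true model at the $\bar s$ points g\'eom\'etriques of bad reduction. I would estimate this defect place by place, using the explicit niveau structure and tame ramification afforded once more by $p>2d+1$, by a uniform local bound $d\cdot 2^{4d^2}$. Because this is a purely local comparison at each bad place it is unaffected by the global inseparability, which is precisely why the term $d\cdot 2^{4d^2}\cdot\bar s$ appears added outside the factor $p^e$ rather than multiplied by it; the main obstacle is exactly to verify this compatibility, namely that the invariant differentials pull back with the clean factor $p^e$ while the local bad-reduction correction does not. The \textbf{a fortiori} statement then follows at once from $\bar s\le f_{A/K}$, since each bad place contributes at least $1$ to the conducteur.
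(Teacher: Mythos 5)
Votre architecture globale co\"incide avec celle du texte : d\'etorsion par Frobenius pour extraire le facteur $p^e$ (via $h_{\mathrm{dif}}(A/K)=p^e\cdot h_{\mathrm{dif}}(A_i/K)$, sous-section 4.3), s\'eparation de la $K/k$-trace par la suite exacte faisant intervenir $B_{\mathcal C}$ et le noyau $\tilde H$ de $\tilde\tau_1$, avec $\deg(\mathcal D_{\tilde\tau_1})\le p^{2d_0}\le p^{2d}$ par la remarque \ref{remarkdegree} (d'o\`u la disparition de $p^{2d}$ quand la trace est nulle), et th\'eor\`eme \ref{abcthm} de type Esnault--Viehweg pour le terme $\frac{d-d_0}{2}(2g-2+\bar s)$. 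Le passage \emph{a fortiori} via $\bar s\le f_{A/K}$ est \'egalement correct.

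En revanche, la derni\`ere \'etape --- celle que vous identifiez vous-m\^eme comme la plus d\'elicate --- est affirm\'ee plut\^ot que d\'emontr\'ee, et c'est l\`a que r\'eside l'essentiel du contenu nouveau de la section 4.4. Le m\'ecanisme du texte n'est pas une \og borne locale uniforme $d\cdot 2^{4d^2}$ \fg{} issue de la structure de niveau : c'est la d\'ecomposition additive $h_{\mathrm{dif}}(A/K)=h_{\mathrm{st}}(A)+\sum_{v}\sum_{w|v}c(A,w|v)\deg(w)$ o\`u $c(A,w|v)$ est le conducteur de changement de base pour $L=K(A[\ell])$. La majoration $c(A,w|v)\le d\cdot\delta(w|v)$ est le lemme \ref{lemLu}, dont la preuve compare les alg\`ebres de Lie des mod\`eles de N\'eron de $A$ et de la restriction de Weil $R_{L_w/K_v}(A_{L_w})$ et invoque le r\'esultat de Liu--Lu selon lequel le conoyau de l'application $\Psi'$ est annul\'e par la diff\'erente $\mathfrak D(w|v)$ ; c'est un argument non trivial que votre proposition ne fournit pas. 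Ensuite $\delta(w|v)=e(w|v)-1$ par mod\'eration (Dedekind, la partie de Swan \'etant nulle car $p>2d+1$), et la sommation sur les $w|v$ fait appara\^itre $[L:K]$, major\'e par $2^{4d^2}$ en choisissant $\ell=2$ : la constante $2^{4d^2}$ est donc un degr\'e d'extension \emph{global} (celui de $K(A[2])/K$), et non une borne locale en chaque place. Enfin, la raison pour laquelle le terme $d\cdot2^{4d^2}\cdot\bar s$ \'echappe au facteur $p^e$ n'a pas \`a \^etre \og v\'erifi\'ee comme compatibilit\'e \fg{} : elle est automatique, le facteur $p^e$ ne frappant que $h_{\mathrm{st}}(A)$ via l'application du th\'eor\`eme \ref{corabc} sur $L$ et Riemann--Hurwitz, tandis que le terme de changement de base est major\'e s\'epar\'ement. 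Sans le lemme \ref{lemLu} (ou un substitut), votre preuve du cas de r\'eduction quelconque reste incompl\`ete.
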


\begin{remark}\label{remMB}
Pour un exemple o\`u la $K/k$-trace d'une vari\'et\'e  ab\'elienne n'est pas nulle, nous renvoyons au travail de Moret-Bailly \cite{mbex}. En effet, dans son exemple $d=\dim(A)=2$ et $d_0=\dim(B)=2$, $A$ a partout bonne r\'eduction sur $\mathbb \mathbb k(t)$ (sans pourtant \^etre une vari\'et\'e ab\'elienne constante), o\`u $k$ est un corps fini, comme au d\'ebut de l'introduction. Nous avons un sch\'ema ab\'elien $\mathcal A\to\mathbb P^1$ qui n'est pas iso-constant. Pour cet exemple $h_{\mathrm{dif}}(A/K)=p$ et $e=1$.  Donc, le th\'eor\`eme \ref{thmabc} nous donne $p\le p^5$, ce qui est bien v\'erifi\'e.
 \end{remark}

On donne maintenant des corollaires du th\'eor\`eme \ref{papa principal}. Pour une vari\'et\'e ab\'elienne $A$, notons $A_{\text{tors}}$ l'ensemble de ses points de torsion et $A_{p'\text{-tors}}$ l'ensemble des points de torsion d'ordre premier \`a $p$. Si on sp\'ecialise $\Gamma=J(K_s)_{p'\text{-tors}}$, on obtient en corollaire du th\'eor\`eme \ref{papa principal} une borne explicite sur le probl\`eme de Manin-Mumford en caract\'eristique $p$, g\'en\'eralisant l'article \cite{vo}, lequel montrait la finitude (dans le cas $J$ ordinaire et $X$ non d\'efini sur $K^p$), mais ne donnait pas de borne. Le th\'eor\`eme \ref{papa principal} nous garantit l'existence d'une extension finie $K'/K$ telle que $X\cap J(K_s)_{p'\text{-tors}}\subset X(K')$. Soit $g'$ le genre de $K'$.

\begin{corollary}\label{ManinMumford}
Soit $X$ une courbe lisse, compl\`ete, g\'eom\'etriquement connexe de genre $d\ge2$, non isotriviale, et d\'efinie sur $K^{p^e}$ mais pas sur $K^{p^{e+1}}$.  Supposons de plus que $p>2d+1$. Le nombre de points de $p'$-torsion de $J(K_s)$ qui sont sur $X$ est fini et born\'e par :
$$\#(X\cap J(K_s)_{p'\text{-}\mathrm{tors}})\le (3p)^d\cdot (8d-2)\cdot (d!)\cdot c_8^e,\quad\text{o\`u}$$
$$c_8=2d\cdot q^{g'-1+p\cdot c_7},\quad\text{et}$$
$$c_7=[K':K]\cdot\left(p^e\cdot\left(\frac{d}2\cdot(2g'+f_{\mathcal X/\mathcal C})+p^{2d}\right)+d\cdot2^{4d^2}\cdot f_{\mathcal X/\mathcal C}\right).$$
Si l'extension $K'/K$ est mod\'erement ramifi\'ee, on peut se passer du genre $g'$ de $K'$ dans les formules ant\'erieures (voir remarque \ref{remtame}). Si la $K/k$-trace de $J$ est nulle, on peut effacer le terme $p^{2d}$ de la somme ant\'erieure.
\end{corollary}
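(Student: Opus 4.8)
Le plan est d'appliquer directement le th\'eor\`eme \ref{papa principal} au sous-groupe particulier $\Gamma = J(K_s)_{p'\text{-tors}}$, puis de simplifier la borne obtenue gr\^ace \`a la structure tr\`es rigide de ce groupe.

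D'abord, je v\'erifierais que $\Gamma$ satisfait bien l'hypoth\`ese du th\'eor\`eme, \`a savoir que $\Gamma/p\Gamma$ est fini. En r\'ealit\'e ce quotient est \emph{trivial}. En effet, tout \'el\'ement $P \in \Gamma$ est un point de torsion d'ordre $m$ premier \`a $p$ ; en choisissant un entier $a$ tel que $ap \equiv 1 \pmod m$ et en utilisant $mP=0$, on obtient $P = (ap)P = p(aP) \in p\Gamma$, d'o\`u $p\Gamma = \Gamma$ et $\#(\Gamma/p\Gamma) = 1$. La multiplication par $p$ est donc un automorphisme du groupe de torsion premi\`ere \`a $p$, ce qui garantit au passage la finitude requise pour invoquer le th\'eor\`eme.

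Ensuite, je reporterais cette valeur dans la borne du th\'eor\`eme \ref{papa principal}. Le facteur $\#(\Gamma/p\Gamma)$ dispara\^it de la constante $C_{\mathrm{BV}}$, qui se r\'eduit alors \`a $(3p)^d\cdot(8d-2)\cdot(d!)$. Par ailleurs, par d\'efinition m\^eme, $E_\Gamma$ est la plus petite extension de $K$ sur laquelle les points de $X \cap \Gamma = X \cap J(K_s)_{p'\text{-tors}}$ sont rationnels : c'est exactement le corps $K'$ de l'\'enonc\'e, de genre $g_\Gamma = g'$. En substituant $E_\Gamma = K'$, $g_\Gamma = g'$ et $[E_\Gamma:K] = [K':K]$ dans les expressions de $c_5$ et $c_6$, on retrouve pr\'ecis\'ement $c_7$ et $c_8$, ce qui donne la borne annonc\'ee.

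Enfin, les assertions finales se d\'eduisent directement des assertions correspondantes du th\'eor\`eme \ref{papa principal} : si l'extension $K' = E_\Gamma$ est mod\'er\'ement ramifi\'ee, on peut se passer du genre $g'$ via la remarque \ref{remtame} ; et si la $K/k$-trace de $J$ est nulle, le terme $p^{2d}$ s'efface. Il ne s'agit donc pas ici d'une d\'emonstration pr\'esentant un v\'eritable obstacle, mais d'une sp\'ecialisation : l'unique point \`a surveiller est la v\'erification que $J(K_s)_{p'\text{-tors}}$ rel\`eve bien du champ d'application du th\'eor\`eme, ce qui est assur\'e par la trivialit\'e de $\Gamma/p\Gamma$ \'etablie ci-dessus.
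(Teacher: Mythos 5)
Votre preuve est correcte et suit exactement la m\^eme d\'emarche que celle du papier : appliquer le th\'eor\`eme \ref{papa principal} \`a $\Gamma=J(K_s)_{p'\text{-tors}}$ en observant que $\Gamma/p\Gamma=\{0\}$ (la multiplication par $p$ \'etant un automorphisme de la torsion d'ordre premier \`a $p$), puis identifier $E_\Gamma=K'$ et $g_\Gamma=g'$. Vous explicitez simplement davantage la v\'erification de la trivialit\'e du quotient et la substitution dans les constantes, ce que le papier laisse implicite.
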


\begin{proof}
%Il suffit d'observer que 
%$$X\cap J(K_s)_{\text{tors}}=(X\cap J(K_s)_{p'\text{-tors}} )\cup (X\cap J(K_s)[p]).$$ 
Il suffit de borner l'intersection par le th\'eor\`eme \ref{papa principal} appli\-qu\'e \`a $\Gamma=J(K_s)_{p'\text{-tors}}$, lequel v\'erifie $\Gamma/p\Gamma=\{0\}$. 
%On borne la seconde intersection par $p^d$.
\end{proof}

\begin{remark}
Pour la finitude de l'intersection $J[p^{\infty}]\cap J(K_s)$, voir les r\'esultats (valables g\'en\'eriquement) de l'article \cite{Vol95} section 4 page 1092.
\end{remark}

On donne ensuite un autre corollaire du th\'eor\`eme \ref{papa principal} concernant l'intersection d'une courbe et des multiples d'un point rationnel.

\begin{corollary}\label{dynamique}
Soit $X$ une courbe lisse, compl\`ete, g\'eom\'etriquement connexe de genre $d\ge2$, non isotriviale, d\'efinie sur $K^{p^e}$ mais pas sur $K^{p^{e+1}}$. Supposons de plus que $p>2d+1$. Soit $P\in{X(K)}$. Alors on a la borne
$$\#(X\cap (\mathbb{Z}\cdot P))\le p\cdot(3p)^d\cdot (8d-2)\cdot (d!)\cdot c_{10}^e,\quad\text{o\`u}$$
$$c_{10}=2d\cdot q^{g-1+p\cdot c_9},\quad\text{et}$$
$$c_9=p^e\cdot\left(\frac{d}2\cdot(2g+f_{\mathcal X/\mathcal C})+p^{2d}\right)+d\cdot2^{4d^2}\cdot f_{\mathcal X/\mathcal C}.$$
\end{corollary}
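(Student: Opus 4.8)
Le plan est d'appliquer directement le théorème \ref{papa principal} au sous-groupe cyclique $\Gamma=\mathbb{Z}\cdot P$ de $J(K_s)$ engendré par l'image $\imath(P)$, le plongement $\imath:X\hookrightarrow J$ étant défini sur $K$ puisque le point $P\in X(K)$ fournit un point base rationnel. Tout revient alors à spécialiser les quantités $\#(\Gamma/p\Gamma)$, $[E_\Gamma:K]$ et $g_\Gamma$ intervenant dans la borne générale.

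Je commencerais par contrôler $\#(\Gamma/p\Gamma)$. Comme $\Gamma$ est monogène, le quotient $\Gamma/p\Gamma$ est cyclique d'ordre divisant $p$ : il est trivial lorsque $P$ est de torsion d'ordre premier à $p$, et de cardinal exactement $p$ dans tous les autres cas. On a donc toujours $\#(\Gamma/p\Gamma)\le p$, ce qui explique le facteur $p$ placé en tête de la borne annoncée, via le terme $C_{\mathrm{BV}}$ du théorème.

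Ensuite, je déterminerais le corps $E_\Gamma$. Puisque $P$ est $K$-rationnel, tous ses multiples $n\cdot\imath(P)$ le sont aussi, d'où $\Gamma\subset J(K)$. Le plongement $\imath$ étant une immersion fermée définie sur $K$, tout point de $X$ dont l'image dans $J$ est $K$-rationnelle est lui-même $K$-rationnel ; par conséquent $X\cap\Gamma\subset X(K)$ et $E_\Gamma=K$. Il en résulte $[E_\Gamma:K]=1$ et $g_\Gamma=g$, de sorte que $c_5$ se spécialise exactement en $c_9$ et $c_6$ en $c_{10}$.

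Il ne resterait plus qu'à reporter ces valeurs dans l'inégalité du théorème \ref{papa principal} :
$$\#(X\cap\Gamma)\le\#(\Gamma/p\Gamma)\cdot(3p)^d\cdot(8d-2)\cdot(d!)\cdot c_6^e\le p\cdot(3p)^d\cdot(8d-2)\cdot(d!)\cdot c_{10}^e,$$
ce qui est précisément la borne voulue. Le point le moins automatique est l'identification $E_\Gamma=K$ : c'est la $K$-rationalité de $P$, et donc celle du plongement $\imath$, qui fait s'effondrer l'extension et autorise à remplacer $g_\Gamma$ par $g$ sans avoir à invoquer le cas modérément ramifié de la remarque \ref{remtame}.
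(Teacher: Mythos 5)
Votre preuve est correcte et suit essentiellement la même démarche que celle du texte, qui se contente d'observer que $\Gamma=\mathbb{Z}\cdot P$ vérifie $\#(\Gamma/p\Gamma)\le p$ puis d'appliquer le théorème \ref{papa principal}. Vous explicitez en plus la spécialisation $E_\Gamma=K$ (donc $[E_\Gamma:K]=1$ et $g_\Gamma=g$), laissée implicite dans l'article, ce qui est un utile complément mais ne change pas l'argument.
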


\begin{proof}
Il suffit d'observer que $\Gamma=\mathbb{Z}\cdot P$ v\'erifie $\#\Gamma/p\Gamma\leq p$. 
\end{proof}

Nous allons suivre le plan suivant. Apr\`es l'expos\'e de deux r\'esultats ant\'erieurs reli\'es \`a cette question en partie 2, nous d\'ecrirons dans la partie 3 les objets utiles \`a la preuve, notamment le morphisme de Frobenius relatif $F$, les groupes de Selmer et les conducteurs de courbes et de vari\'et\'es ab\'eliennes. En partie 4 on prouvera le th\'eor\`eme \ref{thmabc}, un abc pour les vari\'et\'es ab\'eliennes en caract\'eristique $p>0$. En partie 5 on s'int\'eressera \`a d\'ecrire les groupes de Selmer locaux dans les cas de bonne r\'eduction potentielle et de r\'eduction semi-ab\'elienne potentielle. En partie 6 on montrera comment passer au groupe de Selmer global pour mener \`a bien une $F$-descente et conclure la preuve. Enfin, la derni\`ere partie sera consacr\'ee \`a une comparaison avec le cas des corps de nombres o\`u un r\'esultat de comptage a \'et\'e obtenu par G. R\'emond.

\subsection{Remerciements.} Nous remercions D. Vauclair de nous avoir pos\'e une question qui nous a permis dans la sous-section \ref{frob} de compl\'eter la pr\'esentation de la preuve. Dans la section 3 on remarquera que dans la preuve du r\'esultat ant\'erieur \cite[theorem 1.1]{PaPa}, on traitait uniquement le cas o\`u la jacobienne $J$ est ordinaire. L'ordinarit\'e de $J$ \'equivaut \`a $\dim_{\mathbb F_p}J[p]=d$.  Les autres cas se traitent en fait d'une mani\`ere similaire (voir sections 3 et 4 du pr\'esent texte) et l'\'enonc\'e \cite[theorem 1.1]{PaPa} reste vrai tel qu'il est. Nous remercions Felipe Voloch pour ses commentaires, nous permettant de corriger le corollaire \ref{ManinMumford}. 

\section{Description des r\'esultats ant\'erieurs}
\begin{theorem}[Buium-Voloch]\label{bv} \cite[Theorem]{BuVo}
Soit $X/K$ une courbe lisse, compl\`ete, g\'eom\'etriquement connexe de genre $d\ge2$, soit $\Gamma$ un sous-groupe de $J(K_s)$ tel que $\Gamma/p\Gamma$ est fini. On suppose que $X$ ne soit pas d\'efinie sur $K^p$, on a alors
$$\#(X\cap\Gamma)\le\#(\Gamma/p\Gamma) (3p)^d(8d-2) (d!).$$
\end{theorem}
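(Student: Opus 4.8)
La strat\'egie que je proposerais repose sur une \textbf{descente par Frobenius} coupl\'ee \`a un comptage d'intersection sur la jacobienne. L'id\'ee centrale est de construire un homomorphisme de groupes
\[
\mu\colon J(K_s)\longrightarrow W,
\]
\`a valeurs dans un $\mathbb{F}_p$-espace vectoriel $W$ de dimension finie, fabriqu\'e \`a partir de la connexion de Gauss-Manin sur la cohomologie de de Rham de $J$ et de l'op\'erateur de Cartier. Comme $W$ est tu\'e par $p$, l'homomorphisme $\mu$ v\'erifie $\mu(p\Gamma)=0$ et se factorise donc \`a travers $\Gamma/p\Gamma$ ; ce caract\`ere $\mathbb{F}_p$-lin\'eaire est l'ombre arithm\'etique de la factorisation $[p]=V\circ F$ de la multiplication par $p$ \`a travers le Frobenius $F\colon J\to J^{(p)}$. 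On en d\'eduit que $\mu(\Gamma)$ est de cardinal au plus $\#(\Gamma/p\Gamma)$, et c'est ainsi que ce premier facteur de la borne appara\^it naturellement.

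Je restreindrais ensuite $\mu$ \`a la courbe $X$ plong\'ee dans $J$ par $\imath$. Le point-cl\'e est que la compos\'ee $\mu\circ\imath$ n'est \emph{pas constante} sur $X$, et c'est pr\'ecis\'ement ici qu'intervient l'hypoth\`ese que $X$ n'est pas d\'efinie sur $K^p$ : cette condition \'equivaut \`a la non-annulation de la classe de Kodaira-Spencer, ce qui force la non-constance de $\mu\circ\imath$. On dispose alors du recouvrement
\[
X\cap\Gamma\ \subseteq\ \bigcup_{w\in\mu(\Gamma)}(\mu\circ\imath)^{-1}(w),
\]
o\`u chaque fibre est un ferm\'e fini de $X$ d\`es que $\mu\circ\imath$ est non constante. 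Il vient $\#(X\cap\Gamma)\le \#(\Gamma/p\Gamma)\cdot\max_{w}\,\#\,(\mu\circ\imath)^{-1}(w)$, et tout se ram\`ene \`a majorer le cardinal d'une fibre, c'est-\`a-dire le nombre de points d'une intersection de $\imath(X)$ avec un ferm\'e de $J$ d\'ecoup\'e par $\mu$.

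Je bornerais cette fibre par la th\'eorie de l'intersection sur $J$ relativement au diviseur th\^eta $\Theta$, en utilisant $\imath(X)\cdot\Theta=d$ et l'auto-intersection $\Theta^{d}=d!$. La constante explicite $(3p)^d\,(8d-2)\,(d!)$ appara\^it alors comme une majoration effective du degr\'e : le facteur $d!$ refl\`ete $\Theta^{d}$, le facteur $p^d$ provient du degr\'e du Frobenius $F$ (et des sections $p$-lin\'eaires de l'op\'erateur de Cartier), le facteur $3^d$ de la tr\`es grande amplitude effective de $3\Theta$, et le facteur $8d-2$ d'une estimation de type Riemann-Roch du degr\'e de $\mu\circ\imath$ faisant intervenir le genre via $2d-2$. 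Une application de B\'ezout sur $J$ fournit ainsi la majoration d'une fibre par $(3p)^d\,(8d-2)\,(d!)$, d'o\`u le th\'eor\`eme.

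L'obstacle principal est la \textbf{construction effective de $\mu$} avec ses deux propri\'et\'es conjointes : d'une part son caract\`ere $\mathbb{F}_p$-lin\'eaire, qui garantit la factorisation par $\Gamma/p\Gamma$ ; d'autre part la non-constance de $\mu\circ\imath$ d\'eduite de l'hypoth\`ese que $X$ n'est pas d\'efinie sur $K^p$, le tout avec un contr\^ole explicite du degr\'e. C'est cette partie diff\'erentielle-alg\'ebrique (le m\'elange de la connexion de Gauss-Manin et de l'op\'erateur de Cartier, avec un traitement uniforme des cas ordinaire et non ordinaire via le noyau $J[F]$ du Frobenius) qui concentre la difficult\'e conceptuelle, tandis que le suivi pr\'ecis des degr\'es pour obtenir la constante explicite en constitue le c\oe ur quantitatif.
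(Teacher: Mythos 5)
Le point essentiel \`a signaler d'abord : l'article ne d\'emontre pas cet \'enonc\'e. Le th\'eor\`eme \ref{bv} est un r\'esultat import\'e tel quel de \cite{BuVo}, cit\'e comme point de d\'epart de la g\'en\'eralisation, et aucune preuve n'en est donn\'ee dans le texte. Il n'y a donc pas de preuve interne \`a laquelle comparer votre proposition ; celle-ci doit \^etre \'evalu\'ee comme une reconstruction de l'argument de Buium et Voloch.

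Sur le fond, votre esquisse r\'eunit les bons ingr\'edients (factorisation $[p]=V\circ F$, op\'erateur de Cartier, non-annulation de Kodaira--Spencer traduisant le fait que $X$ n'est pas d\'efinie sur $K^p$, intersection avec $3\Theta$), mais elle comporte un trou r\'eel exactement l\`a o\`u toute la difficult\'e se concentre. Vous affirmez que chaque fibre de $\mu\circ\imath$ est un ferm\'e fini de $X$ d\`es que $\mu\circ\imath$ est non constante : or $\mu$ n'est pas un morphisme de vari\'et\'es mais une application arithm\'etique sur les points, \`a valeurs dans des groupes du type $K_s^*/K_s^{*p}$ ou $K_s/K_s^p$ (qui ne sont d'ailleurs pas de dimension finie sur $\mathbb F_p$ ; seule la $p$-torsion du but $W$ est n\'ecessaire pour factoriser par $\Gamma/p\Gamma$, pas sa finitude). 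La non-constance d'une application ensembliste n'entra\^ine ni le caract\`ere alg\'ebrique de ses fibres ni aucune borne sur leur cardinal. Tout le contenu technique de \cite{BuVo} consiste pr\'ecis\'ement \`a montrer que la condition $\mu(\imath(x))=w$ d\'ecoupe $x$ dans un sous-sch\'ema ferm\'e propre de $X$ (via les prolongations/jets et le tir\'e en arri\`ere par le Verschiebung $V$) dont le degr\'e est contr\^ol\'e, ce qui rend ensuite licite l'application de B\'ezout et produit le facteur $(3p)^d(8d-2)(d!)$ ; chez vous ce facteur est affirm\'e plut\^ot que d\'eriv\'e. En l'\'etat, la proposition d\'ecrit correctement la strat\'egie mais ne constitue pas une preuve.
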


Notre but est de relâcher la condition selon laquelle $X$ n'est pas d\'efinie sur $K^p$, qui est superflue, tout en conservant bien s\^ur $X$ non isotriviale car c'est une hypoth\`ese n\'ecessaire. Le premier r\'esultat dans cette direction provient de \cite{PaPa} :

\begin{theorem}[Pacheco-Pazuki]\label{papa} \cite[Theorem]{PaPa}
Soit $X/K$ une courbe lisse, projective, g\'eom\'etriquement connexe d\'efinie sur $K$ et de genre $d\ge2$. On suppose que $X$ est non isotriviale.  On suppose de plus que $p>2d+1$.

 Soit $e$ le plus grand entier naturel tel que $X$ est definie sur $K^{p^e}$ mais pas sur $K^{p^{e+1}}$, alors 
$$\#X(K)\leq  C_{\mathrm{BV}}'\cdot  C_{\mathrm{desc}}\sp e,\quad\text{o\`u}$$
$$\begin{aligned}
C_{BV}'&=p^{2d\cdot(2g+1)+f_{\mathcal X/\mathcal C}} \cdot3^d \cdot(8d-2)\cdot d!\quad\mathrm{et}\\
C_{\mathrm{desc}}&=q^{c_0}\quad\mathrm{avec}\quad c_0=g-1+f_{\mathcal X/\mathcal C}+\frac{1}{2}\cdot p^{e+1} \cdot d\cdot (2g-2+2^{4d^2}\cdot f_{\mathcal X/\mathcal C}).
\end{aligned}$$
\end{theorem}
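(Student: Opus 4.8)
The plan is to reduce to the theorem of Buium and Voloch (Theorem~\ref{bv}) at the foot of the Frobenius tower of $X$, and then to pay an explicit, uniform price for each Frobenius step needed to climb back up to $X$. I would begin by setting $X' = X^{(p^{-e})}$, the curve over $K$ obtained by extracting $p^{e}$-th roots of the coefficients defining $X$: by the maximality of $e$ it is defined over $K$ but not over $K^{p}$, and it remains non isotriviale since $X$ is. The relative Frobenius then factors as a tower of purely inseparable $K$-morphisms of degree $p$,
$$X' = X^{(p^{-e})}\xrightarrow{F}X^{(p^{-e+1})}\xrightarrow{F}\cdots\xrightarrow{F}X^{(p^{-1})}\xrightarrow{F}X^{(p^{0})}=X,$$
inducing on the Jacobians the Frobenius isogenies $F\colon J^{(p^{-j})}\to J^{(p^{-j+1})}$ of degree $p^{d}$, compatibly with the Abel--Jacobi embedding $\imath$.

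For the base case I would apply Theorem~\ref{bv} to $X'$ with $\Gamma'=J'(K)$, where $J'=\mathrm{Jac}(X')$; since $\imath$ identifies $X'(K)$ with a subset of $X'\cap J'(K)$, this gives
$$\#X'(K)\le \#\bigl(J'(K)/pJ'(K)\bigr)\cdot(3p)^{d}(8d-2)(d!).$$
It then remains to bound $\#(J'(K)/pJ'(K))$ by a Mordell--Weil rank-and-torsion estimate over the function field $K$, controlling the rank of $J'(K)$ and its $p$-torsion in terms of the genus $g$ of $\mathcal C$ and the conductor $f_{\mathcal X/\mathcal C}$; this is what produces the factor $p^{2d(2g+1)+f_{\mathcal X/\mathcal C}}$ and hence the constant $C_{\mathrm{BV}}'$.

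The heart of the matter is the $F$-descent governing how the number of rational points grows along the tower. Because a $K$-point of $X^{(p^{-j})}$ maps to a $K$-point of $X^{(p^{-j+1})}$, and the compatibility $\imath\circ F=F\circ\imath$ forces its image in the Jacobian to lie in $F\bigl(J^{(p^{-j})}(K)\bigr)$ up to the kernel, I would bound the growth at each step by the cokernel of $F\colon J^{(p^{-j})}(K)\to J^{(p^{-j+1})}(K)$. The subtlety is that $\ker F$ is the infinitesimal group scheme $\ker(F_{J})$ of order $p^{d}$, so this cokernel must be read inside the \emph{fppf} cohomology group $H^{1}(K,\ker F)$; I would then cut out the relevant Selmer subgroup by the local conditions coming from the N\'eron model over $\mathcal C$, bound the local Selmer groups place by place (trivial away from bad reduction, controlled by the conductor at the bad places), and assemble them through a global Euler--Poincar\'e/Riemann--Roch computation on $\mathcal C$. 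The decisive quantitative input is the bound on the differential height $h_{\mathrm{dif}}$ supplied by the abc inequality of Theorem~\ref{thmabc}: it is exactly what keeps the per-step cost bounded by $q^{c_{0}}$ and feeds the term $\tfrac12 p^{e+1}d(2g-2+2^{4d^{2}}f_{\mathcal X/\mathcal C})$ of $c_{0}$. Iterating over the $e$ steps of the tower yields the factor $C_{\mathrm{desc}}^{\,e}$, and combining with the base case gives the announced inequality.

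The main obstacle, and where I expect the real work to lie, is precisely this descent: since $F$ is purely inseparable with non-reduced kernel, the whole cohomological apparatus must be set up in the fppf topology rather than the \'etale one, and the local Selmer computations at the places of bad reduction have to be uniform enough to be summed into a single bound depending only on $g$, $f_{\mathcal X/\mathcal C}$ and $h_{\mathrm{dif}}$. Making Theorem~\ref{thmabc} the engine that caps the global Euler characteristic, and hence each step by $q^{c_{0}}$, is the crux; by comparison the reduction to Buium--Voloch at the foot of the tower and the Mordell--Weil estimate behind $C_{\mathrm{BV}}'$ are routine.
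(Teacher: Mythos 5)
Your proposal follows essentially the same route as the paper: the result is stated there as a citation of \cite{PaPa} together with a sketch of exactly your strategy (apply the Buium--Voloch bound with $\Gamma=J(K)$ to the curve at the foot of the Frobenius tower, then run an $F$-descente whose per-step cost is the fppf Selmer group of the purely inseparable isogeny $F$, bounded via local conditions from the N\'eron model, Riemann--Roch on $\mathcal C$, and the abc inequality for $h_{\mathrm{dif}}$), and this is precisely the mechanism carried out in detail in Sections 5--6 for the generalization. Your identification of the flat-site cohomology issue and of Theorem~\ref{thmabc} as the quantitative engine matches the paper's Corollaries~\ref{bigselmer} and~\ref{bornesel}.
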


Notons que ces deux \'enonc\'es sont de nature diff\'erente. Le premier concerne des points alg\'ebriques sur une courbe, il y en a une infinit\'e. On obtient la finitude en intersectant avec un sous-groupe de la jacobienne associ\'ee. Le second compte des points rationnels sur une courbe, dont on sait qu'ils sont en nombre fini par le th\'eor\`eme de Samuel. Il y a toutefois un lien entre les deux : pour d\'emontrer le th\'eor\`eme \ref{papa}, on commence par sp\'ecialiser $\Gamma=J(K)\subset J(K_s)$ dans le th\'eor\`eme \ref{bv}, ce qui a pour effet de concentrer la recherche sur les points $K$-rationnels ; on fait ensuite fonctionner une $F$-descente en caract\'eristique $p$ pour lever l'hypoth\`ese $X$ non d\'efinie sur $K^p$. Le r\'esultat pr\'esent\'e ici g\'en\'eralise donc \`a la fois le th\'eor\`eme \ref{bv} et le th\'eor\`eme \ref{papa} en adaptant la strat\'egie de descente \`a la situation g\'en\'erale. 

\newpage

\section{Pr\'eliminaires}
\subsection{Noyau de Frobenius}\label{frob}
Soit $A/K$ une vari\'et\'e ab\'elienne non constante de dimension $d$. Soit $F_{\mathrm{abs}}$ l'automorphisme de Frobenius absolu d\'efini sur $K$ et $A^{(p)}/K$ la vari\'et\'e ab\'elienne d\'efinie par le diagramme suivant : 
\begin{equation}\label{frobvar}
\xymatrix{A^{(p)}\ar[r]\ar[d]&A\ar[d]\\ \mathrm{Spec}\,K\ar[r]^{F_{\mathrm{abs}}}&\mathrm{Spec}\,K.}
\end{equation}
Ce diagramme induit un \emph{morphisme de Frobenius relatif} (qui est une isog\'enie purement ins\'eparable) $F:A\to A^{(p)}$. Il existe une isog\'enie compl\'ementaire $V:A^{(p)}\to A$ telle que $V\circ F=[p]_A$ et $F\circ V=[p]_{A^{(p)}}$. Cette isog\'enie $V$ est appel\'ee le \emph{Verschiebung}. Notons par $\mu_p$, respectivement par $\alpha_p$ le noyau du morphisme de Frobenius absolu $F_{\mathrm{abs}}$ sur $\mathbb G_m(K)$, respectivement sur $\mathbb G_a(K)$. Ce sont des sch\'emas en groupes plats d'ordre $p$. En tant que sch\'ema en groupes le noyau de $F$ est d\'ecrit de la façon suivante : 
\begin{equation}\label{frobenius}
\ker F=\mu_p^{\oplus a}\oplus G,
\end{equation}
o\`u $0\le a\le d$ est un entier (\textit{confer} \cite[\S15]{Mum}) et $G$ est un sch\'ema en groupe de type local-local. De plus, $G$ admet une s\'erie de composition de sch\'emas en groupes
$$G=G_0\supset G_1\supset\cdots\supset G_{2d-a}=0,$$
telle que chaque quotient $G_i/G_{i+1}$ est isomorphe \`a $\alpha_p$. Lorsque $a=d$ et $G=0$, la vari\'et\'e ab\'elienne $A$ est dite ordinaire.

\begin{remark}\label{rempapa}
Dans \cite[3.4.2]{PaPa}, la preuve ne traitait implicitement que le cas o\`u on fait cette hypoth\`ese d'ordinarit\'e, mais vaut en fait en toute g\'en\'eralit\'e comme on va le voir dans la partie suivante.
\end{remark}

\subsection{Description des groupes de Selmer}
(Voir \cite[\S1]{ul}.) Soit $\varphi:B\to A$ une isog\'enie entre vari\'et\'es ab\'eliennes d\'efinies sur $K$. On consid\`erera tous les groupes de cohomologie calcul\'es dans le petit site plat $K_{\mathrm{fl}}$ de $\mathrm{Spec}\,K$. On a donc une suite exacte de sch\'emas en groupes 
$$0\to\ker\varphi\to B\to A\to0.$$
Pour toute place $v$ de $K$, notons par $K_v$ le compl\'et\'e de $K$ en $v$. L'image de l'application cobord (provenant de la suite de cohomologie longue associ\'ee \`a la suite courte ant\'erieure) $\delta_v:A(K_v)\to H^1(K_v,\ker\varphi)$ est d\'efinie comme le groupe de Selmer local $\mathrm{Sel}_B(K_v,\varphi)$. Le groupe de Selmer global $\mathrm{Sel}_B(K,\varphi)$ est d\'efini comme le sous-groupe de $H^1(K,\ker\varphi)$ form\'e des classes $\xi$ telles que ses restrictions locales $\xi_v$ tombent dans $\mathrm{Sel}_B(K_v,\varphi)$. 

Le groupe de Selmer est reli\'e au groupe de Tate-Shafarevich 
$$\sha(B/K)=\ker\left(H^1(B,K)\to\prod_{v\in{M_K}}H^1(K_v,B)\right),$$ o\`u $M_K$ d\'esigne l'ensemble des places de $K$, 
de la façon suivante. Notons par $\varphi_{\shaa}:\sha(B/K)\to\sha(A/K)$ l'application induite par $\varphi$. On a donc une suite exacte de groupes 
$$0\to A(K)/\varphi(B(K))\to\mathrm{Sel}_B(K,\varphi)\to\ker\varphi_{\shaa}\to0.$$
En pratique $\mathrm{Sel}_B(K,\varphi)$ est fini et effectivement calculable.

Les propri\'et\'es suivantes sont prouv\'ees dans \cite[\S1]{ul} :
\begin{itemize}
\item[$\bullet$] Soit $\mathcal O_v$ l'anneau de valuation de $K_v$. Supposons que $B$ et $A$ aient bonne r\'eduction en $v$. Alors la restriction de l'application 
\begin{equation}\label{prop1}
H^1(\mathcal O_v,\ker\varphi)\to H^1(K_v,\ker\varphi)
\end{equation}
induit un isomorphisme 
\begin{equation}\label{prop2}
H^1(\mathcal O_v,\ker\varphi)\cong\mathrm{Sel}_B(K_v,\varphi) .
\end{equation}

\item[$\bullet$] Si $L_w$ est une extension galoisienne finie de $K_v$ de groupe $G(w|v)$ d'ordre premier \`a $\deg\varphi$ (o\`u $w$ est une place au-dessus de $v$), l'application d'inclusion 
\begin{equation}\label{prop3}
H^1(K_v,\ker\varphi)\to H^1(L_w,\ker\varphi)
\end{equation}
induit un isomorphisme 
\begin{equation}\label{prop4}
\mathrm{Sel}_B(K_v,\varphi)\cong\mathrm{Sel}_B(L_w,\varphi)^{G(w|v)}.
\end{equation}
\item[$\bullet$] De façon similaire, si $L$ est maintenant une extension galoisienne finie de $K$ de groupe $G(L/K)$ d'ordre premier \`a $\deg\varphi$, nous avons un isomorphisme 
\begin{equation}\label{SelmerG}
\mathrm{Sel}_B(K,\varphi)\cong\mathrm{Sel}_B(L,\varphi)^{G(L/K)}.
\end{equation}
\end{itemize}

\subsection{Diff\'erentielles et diviseurs}
Fixons une vari\'et\'e ab\'elienne $A/K$ de dimension $d$ d\'efinie sur $K$. Soit $\phi:\mathcal A\to\mathcal C$ le mod\`ele de N\'eron de $A/K$ sur $\mathcal C$ et $e_{\mathcal A}:\mathcal C\to\mathcal A$ sa section neutre. Soit $\omega_{\mathcal A/\mathcal C}=e_{\mathcal A}^*(\bigwedge^d\,\Omega^1_{\mathcal A/\mathcal C})$, c'est un faisceau inversible sur $\mathcal C$ qui correspond \`a un diviseur $\mathcal D(\omega_{\mathcal A/\mathcal C})$ de $\mathcal C$. D\'efinissons la hauteur diff\'erentielle de $A/K$ par 
$$h_{\mathrm{dif}}(A/K)=\deg(\omega_{\mathcal A/\mathcal C}).$$ 

\subsection{Conducteur d'une vari\'et\'e ab\'elienne}
Soit $A/K$ une vari\'et\'e ab\'elienne de dimension $d$, soit $v$ une place de $K$ et $I_v$ un groupe d'inertie en $v$. Notons par $\ell\ne p$ un nombre premier. Soit $T_\ell(A)$ le module de Tate $\ell$-adique de $A$ et $V_\ell(A)=T_\ell(A)\otimes\mathbb Q_\ell$. Soit $\epsilon_v=\mathrm{codim}\,V_\ell(A)^{I_v}$. Sous l'hypoth\`ese que $p>2d+1$, il n'y pas de contribution de la ramification sauvage pour le conducteur de $A/K$ (voir \cite{gr}), donc le conducteur de $A/K$ est d\'efini par 
$$\mathfrak F_{A/K}=\sum_{v\in{M_K}}\epsilon_v\cdot[v]\quad\text{ et }\quad f_{A/K}=\deg\mathfrak F_{A/K}.$$

\begin{remark}\label{rems}
Soit $\bar s$ le nombre de points g\'eom\'etriques de $\mathcal C$ o\`u $A/K$ a mauvaise r\'eduction. Observons que $\bar s\le f_{A/K}$.
\end{remark}

\subsection{Conducteur d'une courbe}\label{conducteur d'une courbe}
Soit $X/K$ une courbe lisse, compl\`ete et g\'e\-om\'e\-triquement connexe. Notons par $\phi:\mathcal X\to\mathcal C$ un mod\`ele de $X/K$ sur $\mathcal C$. Pour tout $v\in\mathcal C$, notons par $\mathcal X_{v,\bar\kappa_v}$ la fibre g\'eom\'etrique de $\phi$ en $v$. 

Nous calculons toutes les caract\'eristiques d'Euler-Poincar\'e par rapport \`a la cohomologie $\ell$-adique. Pour $i\in\{0,1,2\}$ notons par $\delta_{i,X,v}$ le conducteur de Swan de $H^i(X_{\bar K},\mathbb Q_\ell)$ en $v$ (voir \cite{gr}). Soit 
$$\chi(\delta_{X,v})=\sum_{i=0}^2(-1)^i\cdot\delta_{i,X,v}.$$
La multiplicit\'e en $v$ du conducteur de $\mathcal X/\mathcal C$ est donn\'ee par :
$$f_{\mathcal X/\mathcal C,v}=-\chi(X_{\bar K})+\chi(\mathcal X_{v,\bar\kappa_v})+\chi(\delta_{X,v}).$$
De plus, le conducteur global est d\'efini par 
$$f_{\mathcal X/\mathcal C}=\sum_{v\in{\mathcal{C}}} f_{\mathcal X/\mathcal C,v}\cdot\deg v.$$ 
Rectifions ici une remarque os\'ee que l'on trouve dans \cite {PaPa} : le conducteur $f_{\mathcal X/\mathcal C}$ est le conducteur du mod\`ele $\mathcal X/\mathcal C$ de $X/K$ sur $\mathcal C$ et n'est donc pas une notion birationnelle.

\section{Un th\'eor\`eme abc pour les vari\'et\'es ab\'eliennes en caract\'eristique $p$}
Dans les trois premi\`eres sous-sections de ce paragraphe, nous supposons que $A/K$ a partout r\'eduction semi-ab\'elienne. Dans la  quatri\`eme sous-section, on supprime cette hypoth\`ese.

Une autre hypoth\`ese n\'ecessaire, initialement, pour les deux premi\`eres sous-sections c'est que \emph{l'application de Kodaira-Spencer} (associ\'ee \`a la restriction d'un mod\`ele de N\'eron $\phi:\mathcal A\to\mathcal C$ d'une vari\'et\'e ab\'elienne $A/K$ \`a l'ouvert $\mathcal U$ de $\mathcal C$ o\`u $A/K$ a bonne r\'eduction) est non nulle. Pour la d\'efinition de cette application, voir \cite[(5.13)]{HiPa}. Rappelons juste que cela \'equivaut \`a dire que $A$ est d\'efinie sur $K$, mais pas sur $K^p$. Apr\`es avoir obtenu la borne sup\'erieure pour la hauteur diff\'erentielle de $A/K$, sous cette hypoth\`ese, on revient au cas g\'en\'eral par un argument associ\'e \`a l'application de Frobenius absolue for $\mathrm{Spec}(K)$ (voir \cite[\S5.5]{HiPa}). 

Pour la trosi\`eme sous-section, l'hypoth\`ese sur le non annuellement de l'application de Kodaira-Spencer n'est pas n\'ecessaire.

\subsection{Cas r\'eduction semi-ab\'elienne et trace nulle}

\begin{theorem}\label{abcthm}\cite[Theorem 5.3]{HiPa}
Soit $A/K$ une vari\'et\'e ab\'elienne non cons\-tan\-te de dimension $d$. Supposons que $A$ ait partout r\'eduction semi-ab\'elienne et que l'application de Kodaira-Spencer associ\'ee \`a son mod\`ele de N\'eron $\phi:\mathcal:\to\mathcal C$ soit nulle. Supposons aussi que la $K/k$-trace de $A$ soit nulle et de plus que $p> 2d+1$. Soit $\bar s$ le nombre des points g\'eom\'etriques de $\mathcal C$ o\`u $A$ a mauvaise r\'eduction. L'in\'egalit\'e suivante est satisfaite : 
$$h_{\mathrm{dif}}(A/K)\le\frac{d}2\cdot(2g-2+\bar s),$$
et, \textbf{a fortiori} (\textbf{cf.} remarque \ref{rems}),
$$h_{\mathrm{dif}}(A/K)\le\frac{d}2\cdot(2g-2+f_{A/K}).$$
\end{theorem}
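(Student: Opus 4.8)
L'id\'ee est de traduire la borne cherch\'ee sur $h_{\mathrm{dif}}(A/K)=\deg\omega_{\mathcal A/\mathcal C}$ en une \emph{in\'egalit\'e d'Arakelov} pour le fibr\'e de Hodge de la famille $\phi:\mathcal A\to\mathcal C$, puis d'invoquer sa version en caract\'eristique $p$ (c'est pr\'ecis\'ement le contenu de \cite[Theorem 5.3]{HiPa}, reposant sur \cite[Theorem 3.1]{esvi}). Posons $\underline\omega=e_{\mathcal A}^*\Omega^1_{\mathcal A/\mathcal C}$, fibr\'e de rang $d$ sur $\mathcal C$ dont le d\'eterminant est le faisceau inversible $\omega_{\mathcal A/\mathcal C}$, de sorte que $h_{\mathrm{dif}}(A/K)=\deg\det\underline\omega=d\cdot\mu(\underline\omega)$. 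Sur l'ouvert $\mathcal U=\mathcal C\setminus\mathcal S$ de bonne r\'eduction, je consid\'ererais la cohomologie de de Rham relative $\mathcal H=R^1\phi_*\Omega^\bullet_{\mathcal A/\mathcal U}$ munie de sa connexion de Gauss-Manin $\nabla$. L'hypoth\`ese de r\'eduction semi-ab\'elienne partout assure l'unipotence de la monodromie locale ; ainsi $\mathcal H$ se prolonge (extension canonique de Deligne) en un fibr\'e sur $\mathcal C$ muni d'une connexion \`a p\^oles logarithmiques le long de $\mathcal S$, \`a r\'esidus nilpotents. Une polarisation de $A$, qui se prolonge gr\^ace \`a la semi-ab\'elianit\'e, induit un accouplement parfait $\mathcal H\times\mathcal H\to\mathcal O_{\mathcal C}$, d'o\`u $\mathcal H\cong\mathcal H^\vee$ et donc $\deg\mathcal H=0$.

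La filtration de Hodge donne une suite exacte $0\to\underline\omega\to\mathcal H\to\mathcal Q\to0$ avec $\mathcal Q$ de rang $d$, et $\deg\mathcal H=0$ force $\deg\mathcal Q=-\deg\underline\omega$, soit $\mu(\mathcal Q)=-\mu(\underline\omega)$. En composant l'inclusion $\underline\omega\hookrightarrow\mathcal H$, la connexion $\nabla$ et la projection $\mathcal H\to\mathcal Q$, on obtient le champ de Higgs (c'est-\`a-dire l'application de Kodaira-Spencer)
\[
\theta:\underline\omega\longrightarrow\mathcal Q\otimes\Omega^1_{\mathcal C}(\log\mathcal S),
\]
non nul par hypoth\`ese (autrement dit $A$ d\'efinie sur $K$ mais pas sur $K^p$). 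Le but devient alors d'\'etablir l'in\'egalit\'e de pentes $\mu(\underline\omega)-\mu(\mathcal Q)\le\deg\Omega^1_{\mathcal C}(\log\mathcal S)=2g-2+\bar s$, c'est-\`a-dire $2\,\mu(\underline\omega)\le 2g-2+\bar s$ ; en multipliant par $d$ on obtient exactement $\deg\det\underline\omega\le\frac d2(2g-2+\bar s)$, et la version \og a fortiori \fg{} suit de $\bar s\le f_{A/K}$ (remarque \ref{rems}).

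Le point d\'elicat est justement cette in\'egalit\'e de pentes en caract\'eristique $p$. En caract\'eristique nulle elle r\'esulte de la semistabilit\'e du syst\`eme de fibr\'es de Hodge (correspondance de Simpson) ou de la positivit\'e hodge-th\'eorique du fibr\'e $\underline\omega$ ; ces outils analytiques disparaissent ici et doivent \^etre remplac\'es par un argument de Frobenius--Cartier, qui est le c\oe ur de \cite{esvi} et de \cite{HiPa}. L'hypoth\`ese $p>2d+1$ sert \`a deux niveaux : elle annule la ramification sauvage, de sorte que la connexion logarithmique est l'objet honn\^ete attendu, et elle autorise le recours \`a l'isomorphisme de Cartier (d\'ecomposition de Deligne--Illusie), qui contr\^ole les tir\'es en arri\`ere it\'er\'es du fibr\'e de Higgs par le Frobenius. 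Enfin l'hypoth\`ese de trace $K/k$ nulle \'elimine le plus grand sous-fibr\'e \`a champ de Higgs nul (la partie \og constante \fg{} provenant de $k$), sur lequel $\theta$ s'annulerait et qui ferait sinon appara\^itre le facteur correctif $\frac{d-d_0}2$ du th\'eor\`eme \ref{thmabc} ; ici $d_0=0$ et le rang $d$ tout entier est soumis \`a l'in\'egalit\'e. L'obstacle principal reste donc le transfert en caract\'eristique $p$ de la positivit\'e du fibr\'e de Hodge via le Frobenius, la non-nullit\'e de $\theta$ (donc de la $p$-courbure) \'etant l'ingr\'edient qui rend l'argument effectif.
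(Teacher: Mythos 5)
Note first that the paper itself does not prove this statement: Theorem \ref{abcthm} is quoted from \cite[Theorem 5.3]{HiPa} and used as a black box, so there is no internal argument to compare yours against. Your sketch is a reasonable reconstruction of the strategy behind the cited result --- vanishing of the degree of the logarithmically extended de Rham bundle $\mathcal H$, self-duality from the polarization identifying the graded quotient $\mathcal Q$ with $\underline\omega^\vee$, and the Kodaira--Spencer map $\theta:\underline\omega\to\underline\omega^\vee\otimes\Omega^1_{\mathcal C}(\log\mathcal S)$ yielding $2\deg\underline\omega\le d\,(2g-2+\bar s)$. But, as you acknowledge, the decisive step (the slope inequality in characteristic $p$, where what the determinant argument really requires is injectivity of $\theta$, not mere nonvanishing) is precisely the content of the theorem being cited; your text is therefore an annotated pointer to \cite{HiPa} and \cite{esvi} rather than an independent proof. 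Since the paper does exactly the same, this is acceptable, and your outline is consistent with the actual proof in the reference.

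Two substantive remarks. First, the statement as printed assumes that the Kodaira--Spencer map \emph{soit nulle}; this is a typo (the preamble of Section 4 and the case treated in subsection 4.3 make clear that the intended hypothesis is that it is \emph{non nulle}, i.e. $A$ is defined over $K$ but not over $K^p$), and you have correctly worked with the nonzero hypothesis --- it would have been worth flagging the discrepancy explicitly. Second, be careful with the claim that the polarization gives $\deg\mathcal H=0$: the pairing on the canonical extension degenerates along $\mathcal S$, so the vanishing is not a formal duality statement; it comes from the nilpotence of the residues of the logarithmic connection (in characteristic $p$ this is the nontrivial input \cite[Theorem 3.1]{esvi}). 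These are presentation issues rather than gaps in an argument the paper expects you to supply.
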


\begin{remark}\label{remtrace}
Rappelons que la $K/k$-trace $(\tau,B)$ de $A$ consiste d'une vari\'et\'e ab\'elienne $B$ d\'efinie sur $k$ et d'un homomorphisme de $K$-vari\'et\'es ab\'eliennes $\tau:B_K\to A$. Elle satisfait la propri\'et\'e universelle que pour n'importe qu'elle autre vari\'et\'e ab\'elienne $C/k$ et homomorphisme $\tau':C_K\to A$, cet homomorphisme se factorise par $\tau$. 
\end{remark}

\subsection{Cas r\'eduction semi-ab\'elienne et trace non nulle}
En caract\'eristique $p>0$, le morphisme $\tau$ n'est pas n\'ecessairement injectif. En effet, il suit du fait que $k$ est un corps fini que l'extension $K/k$ est r\'eguli\`ere. Donc, $\ker(\tau)$ est un sch\'ema en groupes connexe, \emph{a fortiori} \textit{infinit\'esimal} (\emph{confer} \cite[Theorem 6.12]{Co}). Il existe une $K$-sous-vari\'et\'e ab\'elienne $A_1$ de $A$ telle que $\mathrm{Tr}_{K/k}(A/A_1)=0$. Cette sous-vari\'et\'e ab\'elienne est dite \emph{la sous-vari\'et\'e ab\'elienne maximale de $A$ par rapport \`a $K/k$}. L'homomorphisme $\tau$ induit  une $K$-isog\'enie $\tau_1:B_K\to A_1$. En particulier, $A_1$ a partout bonne r\'eduction.

Soit $\phi:\mathcal A\to\mathcal C$, respectivement $\phi_1:\mathcal A_1\to\mathcal C$, le mod\`ele de N\'eron de $A/K$ sur $\mathcal C$, respectivement de $A_1/K$ sur $\mathcal C$. Notons $B_{\mathcal C}=B\times_k\mathcal C$, nous dirons que $B_{\mathcal C}$ est un sch\'ema ab\'elien \emph{iso-constant}. L'isog\'enie $\tau_1$ s'\'etend en un homomorphisme de $\mathcal C$-sch\'emas ab\'eliens $\tilde\tau_1:B_{\mathcal C}\to\mathcal A_1$, notons $\tilde H=\ker(\tilde\tau_1)$. Observons que $\mathcal A_1$ est un sous-sch\'ema ab\'elien du sch\'ema semi-ab\'elien $\mathcal A$.  On en d\'eduit une suite exacte de $\mathcal C$-sch\'emas en groupes : 
\begin{equation}\label{suite1}
\xymatrix{0\ar[r]&\tilde H\ar[r]&B_{\mathcal C}\ar[r]^{\tilde\tau_1}&\mathcal A\ar[r]&\mathcal A/\mathcal A_1\ar[r]&0.}
\end{equation}

Pour tout sch\'ema en groupes $\mathcal G$ lisse et plat sur $\mathcal C$ de dimension relative $\gamma$, soit $e_{\mathcal G}$ sa section unit\'e et $\omega_{\mathcal G/\mathcal C}=e_{\mathcal G}^*(\bigwedge^\gamma\Omega^1_{\mathcal G/\mathcal C})$. Ce faisceau est inversible sur $\mathcal C$ et son degr\'e est not\'e par $\deg(\omega_{\mathcal G/\mathcal C})$. La suite exacte (\ref{suite1}) nous donne un isomorphisme : 
\begin{equation}\label{iso1}
\omega_{\mathcal A/\mathcal C}\cong\omega_{B_{\mathcal C}/\mathcal C}\otimes\omega_{(\mathcal A/\mathcal A_1)/\mathcal C}\otimes\omega_{\tilde H/\mathcal C}^{-1}.
\end{equation}
(Pour la d\'efinition du dernier terme voir \cite[\S2, p. 36, 2.2 (b)]{De}). D'o\`u : 
\begin{equation}\label{deg1}
\deg(\omega_{\mathcal A/\mathcal C})=\deg(\omega_{B_{\mathcal C}/\mathcal C})+\deg(\omega_{(\mathcal A/\mathcal A_1)/\mathcal C})-\deg(\omega_{\tilde H/\mathcal C}).
\end{equation}
Comme $B_{\mathcal C}/\mathcal C$ est iso-constant, nous concluons que $\deg(\omega_{B_{\mathcal C}/\mathcal C})=0$.

Il suit maintenant du th\'eor\`eme \ref{abcthm} : 
\begin{equation}\label{deg2}
\deg(\omega_{(\mathcal A/\mathcal A_1)/\mathcal C})\le\frac{d-d_0}2\cdot(2g-2+\bar{s}).
\end{equation}

Soit $\mathcal D_{\tilde\tau_1}$ la diff\'erente de $\tilde\tau_1$ d\'efinie par Raynaud en \cite[Proposition 1.4.1, p. 205]{Ra}. Il montre dans ce texte que $\omega_{\tilde H/\mathcal C}^{-1}\cong\mathcal D_{\tilde\tau_1}$.
Donc,
$$\deg(\omega_{\mathcal A/\mathcal C})\le\frac{d-d_0}2\cdot(2g-2+\bar s)+\deg(\mathcal D_{\tilde\tau_1}).$$

\begin{remark}\label{remarkdegree}
Il suit du théorème 2.1.1 de \cite{Ra} que si $\tilde\tau_1^\vee$ note l'isogénie duale de $\tilde\tau_1$, nous avons l'égalité suivante : 
$$\mathcal D_{\tilde\tau_1}\cdot\mathcal D_{\tilde\tau_1^\vee}=(\deg(\tilde\tau_1)).$$
En particulier, 
$$\deg(\mathcal D_{\tilde\tau_1})\le\deg(\tilde\tau_1).$$ 
Par ailleurs, d'après \cite[\S6]{Co}, une fois que $k$ est fini, donc $K/k$ est régulière, nous avons que $\ker(\tilde\tau_1)$ est un schéma en groupes connexe d'ordre $p^{2d_0}$, où $d_0=\dim(B)\le d=\dim(A)$. En conséquence, 
$$\deg(\tilde\omega_{\mathcal A})\le\frac{d-d_0}2\cdot(2g-2+\bar s)+p^{2d}.$$
\end{remark}

\begin{theorem}\label{corabc}
Soit $A/K$ une vari\'et\'e ab\'elienne non constante de dimension $d$ telle que l'application de Kodaira-Spencer associ\'ee \`a son mod\`ele de N\'eron $\phi:\mathcal A\to\mathcal C$ est nulle. Soit $(\tau,B)$ sa $K/k$-trace et $d_0=\dim(B)$. Supposons que $A/K$ ait partout r\'eduction semi-ab\'elienne et que $p>2d+1$. Soit $\bar s$ le nombre de points g\'eom\'etriques de $\mathcal C$ o\`u $A$ admet mauvaise r\'eduction. Alors,
$$h_{\mathrm{dif}}(A/K)\le\frac{d-d_0}2\cdot(2g-2+\bar s)+p^{2d},$$
et \textbf{a fortiori} (\textbf{cf.} remarque \ref{rems}),
$$h_{\mathrm{dif}}(A/K)\le\frac{d-d_0}2\cdot(2g-2+f_{A/K})+p^{2d}.$$

En particulier, si $\mathrm{Tr}_{K/k}(A)=0$ nous pouvons effacer le terme $p^{2d}$ de la borne.
\end{theorem}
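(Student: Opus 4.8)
Le plan est d'assembler les in\'egalit\'es \'etablies dans les paragraphes pr\'ec\'edents, en se ramenant au cas de trace nulle d\'ej\`a trait\'e au th\'eor\`eme \ref{abcthm}. Rappelons d'abord que par d\'efinition $h_{\mathrm{dif}}(A/K)=\deg(\omega_{\mathcal A/\mathcal C})$, de sorte qu'il suffit de majorer ce degr\'e. L'id\'ee centrale est de comparer $A$ \`a son quotient $A/A_1$, o\`u $A_1$ est la sous-vari\'et\'e ab\'elienne maximale de $A$ par rapport \`a $K/k$ : par maximalit\'e $\mathrm{Tr}_{K/k}(A/A_1)=0$, et $A/A_1$ h\'erite de la r\'eduction semi-ab\'elienne partout ainsi que de l'annulation de l'application de Kodaira-Spencer, de sorte que le th\'eor\`eme \ref{abcthm} s'appliquera \`a $A/A_1$. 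Le prix \`a payer pour ce passage au quotient sera contr\^ol\'e par la diff\'erente de Raynaud de l'isog\'enie $\tilde\tau_1$.

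Concr\`etement, on part de la suite exacte (\ref{suite1}) de $\mathcal C$-sch\'emas en groupes issue de la $K/k$-trace, qui fournit l'isomorphisme (\ref{iso1}) entre faisceaux inversibles sur $\mathcal C$, puis l'\'egalit\'e de degr\'es (\ref{deg1}). On y injecte alors trois ingr\'edients. D'abord, comme $B_{\mathcal C}/\mathcal C$ est iso-constant, on a $\deg(\omega_{B_{\mathcal C}/\mathcal C})=0$. Ensuite, le th\'eor\`eme \ref{abcthm} appliqu\'e \`a $A/A_1$ (de dimension $d-d_0$, de trace nulle, dont le lieu de mauvaise r\'eduction est domin\'e par celui de $A$ puisque $A_1$ a bonne r\'eduction partout) donne la majoration (\ref{deg2}), soit $\deg(\omega_{(\mathcal A/\mathcal A_1)/\mathcal C})\le\frac{d-d_0}2\cdot(2g-2+\bar s)$. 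Enfin, l'identification de Raynaud $\omega_{\tilde H/\mathcal C}^{-1}\cong\mathcal D_{\tilde\tau_1}$ ram\`ene le dernier terme de (\ref{deg1}) au degr\'e de cette diff\'erente.

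Il reste \`a majorer $\deg(\mathcal D_{\tilde\tau_1})$, ce qui est pr\'ecis\'ement l'objet de la remarque \ref{remarkdegree} : la formule $\mathcal D_{\tilde\tau_1}\cdot\mathcal D_{\tilde\tau_1^\vee}=(\deg(\tilde\tau_1))$ donne $\deg(\mathcal D_{\tilde\tau_1})\le\deg(\tilde\tau_1)$, et puisque $\ker(\tilde\tau_1)$ est connexe d'ordre $p^{2d_0}$ (la r\'egularit\'e de $K/k$ \'etant assur\'ee par la finitude de $k$), on obtient $\deg(\mathcal D_{\tilde\tau_1})\le p^{2d_0}\le p^{2d}$. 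En reportant dans (\ref{deg1}) on conclut \`a la premi\`ere in\'egalit\'e ; la seconde (\textbf{a fortiori}) s'en d\'eduit gr\^ace \`a $\bar s\le f_{A/K}$ (remarque \ref{rems}). Enfin, si $\mathrm{Tr}_{K/k}(A)=0$, alors $d_0=0$ et $A_1=0$ : la suite exacte (\ref{suite1}) ainsi que le terme de diff\'erente disparaissent, et le th\'eor\`eme \ref{abcthm} s'applique directement \`a $A$, ce qui efface le facteur $p^{2d}$.

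La principale difficult\'e semble r\'esider dans la v\'erification que $A/A_1$ satisfait bien les hypoth\`eses du th\'eor\`eme \ref{abcthm} --- en particulier que l'annulation de l'application de Kodaira-Spencer se transmet au quotient et que $A/A_1$ reste \`a r\'eduction semi-ab\'elienne partout --- ainsi que dans le contr\^ole pr\'ecis de l'ordre de $\ker(\tilde\tau_1)$ via la th\'eorie de Raynaud ; le reste n'est qu'un report des in\'egalit\'es d\'ej\`a obtenues.
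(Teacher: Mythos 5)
Votre démonstration est correcte et suit essentiellement le même chemin que celle du texte : suite exacte (\ref{suite1}), isomorphisme (\ref{iso1}) et égalité de degrés (\ref{deg1}), annulation de $\deg(\omega_{B_{\mathcal C}/\mathcal C})$ par iso-constance, application du théorème \ref{abcthm} au quotient $A/A_1$ de trace nulle, puis contrôle de la différente de Raynaud par $\deg(\mathcal D_{\tilde\tau_1})\le\deg(\tilde\tau_1)=p^{2d_0}\le p^{2d}$ comme dans la remarque \ref{remarkdegree}. Rien à redire.
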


\subsection{L'application de Kodaira-Spencer est non nulle}
Dans ce cas-l\`a, on note par $e\ge1$ le plus grand entier tel que $A$ soit d\'efinie sur $K^{p^e}$, mais n'est pas d\'efinie sur $K^{p^{e+1}}$. Donc, il existe une vari\'et\'e ab\'elienne $A_i$ d\'efinie sur $K$, mais pas sur $K^p$ (donc, telle que son application de Kodaira-Spencer associ\'ee est non nulle), telle que $A\cong A_i^{(p^e)}$. Il suit de \cite[Remark 5.13, (5.18)]{HiPa} que 
$$h_{\mathrm{dif}}(A/K)=p^e\cdot h_{\mathrm{dif}}(A_i/K).$$

Il suit donc du th\'eor\`eme \ref{corabc} la g\'en\'eralisation suivante.

\begin{theorem}\label{thmksnon}
Soit $A/K$ une vari\'et\'e ab\'elienne non constante de dimension $d$. Soit $(\tau,B)$ sa $K/k$-trace et $d_0=\dim(B)$. Supposons que $A/K$ ait partout r\'eduction semi-ab\'elienne et que $p>2d+1$. Soit $\bar s$ le nombre de points g\'eom\'etriques de $\mathcal C$ o\`u $A$ admet mauvaise r\'eduction. Alors,
$$h_{\mathrm{dif}}(A/K)\le p^e\cdot\left(\frac{d-d_0}2\cdot(2g-2+\bar s)+p^{2d}\right),$$
et \textbf{a fortiori} (\textbf{cf.} remarque \ref{rems}),
$$h_{\mathrm{dif}}(A/K)\le p^e\cdot\left(\frac{d-d_0}2\cdot(2g-2+f_{A/K})+p^{2d}\right).$$

En particulier, si $\mathrm{Tr}_{K/k}(A)=0$ nous pouvons effacer le terme $p^{2d}$ de la borne.
\end{theorem}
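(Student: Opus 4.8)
The plan is to deduce the bound from Theorem \ref{corabc} by untwisting along the relative Frobenius, so that the only genuinely new ingredient is the scaling behaviour of the differential height. Here $e$ denotes the largest integer such that $A$ is defined over $K^{p^e}$ but not over $K^{p^{e+1}}$. First I would dispose of the case $e=0$: there $A$ is not defined over $K^p$, so its Kodaira-Spencer map is non-zero, the hypothesis of Theorem \ref{corabc} is met, and since $p^e=1$ the desired inequality is exactly the conclusion of that theorem. So I may assume $e\ge1$.

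For $e\ge1$ the key structural input is that $A$ descends through the relative Frobenius: since $e$ is maximal, there is a non-constant abelian variety $A_i/K$ which is \emph{not} defined over $K^p$ (hence has non-zero Kodaira-Spencer map) and satisfies $A\cong A_i^{(p^e)}$. I would then invoke the height relation $h_{\mathrm{dif}}(A/K)=p^e\cdot h_{\mathrm{dif}}(A_i/K)$ from \cite[Remark 5.13, (5.18)]{HiPa}: each application of the relative Frobenius isogeny $F\colon A_i\to A_i^{(p)}$ multiplies the degree of the Hodge bundle $\omega$ by $p$, so the $e$-fold twist scales the differential height by $p^e$.

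The next step is to check that $A_i$ inherits the hypotheses of Theorem \ref{corabc} and that the invariants in the bound are unchanged by the twist. The dimension $d$ is preserved and $p>2d+1$ still holds. For $\ell\ne p$ the purely inseparable isogeny $F$ has degree prime to $\ell$, hence induces an isomorphism $V_\ell(A_i)\cong V_\ell(A)$ of $G_K$-modules; therefore the inertia action, and with it the reduction type at every place, is identical for $A_i$ and $A$. In particular $A_i$ again has everywhere semi-abelian reduction, and the codimensions $\epsilon_v$ agree, so the bad-reduction locus and the conductor coincide, giving the same $\bar s$ (and the same $f_{A/K}$). Likewise $d_0=\dim B$ is unchanged: the $K/k$-trace $B$ is defined over the finite field $k$, so $B^{(p)}\cong B$ and the trace is transported functorially through $F$; in particular $\mathrm{Tr}_{K/k}(A)=0$ if and only if $\mathrm{Tr}_{K/k}(A_i)=0$.

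With these verifications in hand, applying Theorem \ref{corabc} to $A_i$ gives $h_{\mathrm{dif}}(A_i/K)\le\frac{d-d_0}2(2g-2+\bar s)+p^{2d}$, and multiplying by $p^e$ yields the first inequality; the \textbf{a fortiori} form follows from $\bar s\le f_{A/K}$ (remarque \ref{rems}), and the trace-zero refinement follows from the corresponding statement in Theorem \ref{corabc} applied to $A_i$. The one delicate point, which I would treat as the main obstacle, is justifying the exact scaling $h_{\mathrm{dif}}(A/K)=p^e h_{\mathrm{dif}}(A_i/K)$ together with the existence of the Frobenius descent $A\cong A_i^{(p^e)}$ with $A_i$ of non-vanishing Kodaira-Spencer; both rest on the differential-geometric analysis of \cite{HiPa}, everything else being a transport of numerical invariants through a purely inseparable isogeny.
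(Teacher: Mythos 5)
Votre démonstration est correcte et suit essentiellement la même voie que le texte : descente par Frobenius $A\cong A_i^{(p^e)}$ vers une variété $A_i$ d'application de Kodaira--Spencer non nulle, relation $h_{\mathrm{dif}}(A/K)=p^e\cdot h_{\mathrm{dif}}(A_i/K)$ tirée de \cite[Remark 5.13, (5.18)]{HiPa}, puis application du théorème \ref{corabc}. Vos vérifications supplémentaires (invariance de $\bar s$, de $d_0$ et du conducteur sous l'isogénie purement inséparable) sont laissées implicites dans le texte mais sont exactes.
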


\begin{remark}
Observons que le plus grand entier $e\ge0$ tel que $X$ soit d\'efinie sur $K^{p^e}$, mais ne soit pas d\'efinie sur $K^{p^{e+1}}$, est aussi le plus grand entier tel que sa vari\'et\'e jacobienne $J$ soit d\'efinie sur $K^{p^e}$, mais ne soit pas d\'efinie sur $K^{p^{e+1}}$. Donc, le $e$ qui apparait dans le th\'eor\`eme \ref{corabc}, dans le cas o\`u $A$ est la vari\'et\'e jacobienne de $X$, est le m\^eme que celui d\'efini dans la introduction.
\end{remark}

\subsection{Cas r\'eduction quelconque et trace quelconque}
Soit $A/K$ une vari\'et\'e ab\'elienne de dimension $d$ \`a r\'eductions quelconques. Soit $B=\mathrm{Tr}_{K/k}(A)$ de dimen\-sion $d_0$.

Soit $\ell\ne p$ un premier et $L=K(A[\ell])$. Notons par $\mathcal Ust_{A/K}$ l'ensemble des places $v$ de $K$ telles que $A/K$ n'a pas de r\'eduction semi-ab\'elienne sur $v$. 

\subsubsection{Le conducteur de changement de base}
Pour toute $v\in\mathcal Ust_{A/K}$, soit $w$ une place de $L$ au-dessus de $v$. Soit $\phi:\mathcal A\to\mathcal C$ le mod\`ele de N\'eron de $A/K$ sur $\mathcal C$. Notons par $e_{\mathcal A}$ sa section unit\'e. Soit $\mathcal A_v$ le mod\`ele de N\'eron de $A_{K_v}$ sur $\mathcal O_v$ et $\mathcal A_w$ le mod\`ele de N\'eron de $A_{L_w}$ sur $\mathcal O_w$. Notons
$$\Omega(A,w|v)=\frac{H^0(\mathrm{Spec}(\mathcal O_v),e_{\mathcal A}^*(\bigwedge^d\Omega^1_{\mathcal A_v/\mathcal O_v}))\otimes\mathcal O_w}{H^0(\mathrm{Spec}(\mathcal O_w),e_{\mathcal A}^*(\bigwedge^d\Omega^1_{\mathcal A_w/\mathcal O_w}))}.$$
Cet objet est un $\mathcal O_w$-module de longueur finie not\'ee $l(A,w|v)$. \emph{Le conducteur de changement de base} est d\'efini par 
$$c(A,w|v)=e(w|v)^{-1}l(A,w|v),$$ 
o\`u $e(w|v)$ note l'indice de ramification de $w$ sur $v$. Rappelons que la hauteur stable de $A/K$ est donn\'ee par la formule 
$$h_{\mathrm{st}}(A)=[L:K]^{-1}h_{\mathrm{dif}}(A_L/L).$$ 
Il suit de la d\'emonstration de \cite[Lemma 3.4]{HiPa} que 
\begin{equation}\label{ht1}
h_{\mathrm{st}}(A)+\sum_{v\in\mathcal Ust_{A/K}}\sum_{w|v}c(A,w|v)\deg(w)=h_{\mathrm{dif}}(A/K).
\end{equation}

Pour tout $v\in\mathcal Ust_{A/K}$ et $w|v$ une place de $L$, soit $B(w|v)=R_{L_w/K_v}(A_{L_w})$ la restriction de Weil de $A_{L_w}$ \`a $K_v$. Soit $\mathfrak D(w|v)$ la diff\'erente de $L_w/K_v$ et $\delta(w|v)=\mathrm{ord}_w(\mathfrak D(w|v))$. Il suit du fait que $L/K$ est une extension galoisienne que $L_w/K_v$ l'est aussi et on note par $G(w|v)$ son groupe de Galois.

La preuve du lemme suivant est inspir\'ee d'un r\'esultat similaire sur les corps de nombres communiqu\'ee par Huajun Lu.

\begin{lemma}\label{lemLu}
Avec les notations pr\'ec\'edentes on a 
$$c(A,w|v)\le c(B(w|v),w|v)\le\delta(w|v)\cdot d.$$
\end{lemma}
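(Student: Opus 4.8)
The plan is to realise $A_{K_v}$ as an isogeny factor of the Weil restriction $B(w|v)=R_{L_w/K_v}(A_{L_w})$, whose Néron model becomes explicit after base change to $L_w$, and then to estimate the base change conductor of $B(w|v)$ directly. First I would record the two structural morphisms attached to the Weil restriction. The unit of adjunction is a closed immersion $\iota\colon A_{K_v}\hookrightarrow B(w|v)$; since $L_w/K_v$ is Galois with group $G(w|v)$, base change to $L_w$ yields a canonical isomorphism $B(w|v)\times_{K_v}L_w\cong\prod_{\sigma\in G(w|v)}A_{L_w}$, the conjugates all coinciding with $A_{L_w}$ because $A$ is already defined over $K_v$, and under this isomorphism $\iota$ becomes the diagonal embedding $A_{L_w}\hookrightarrow A_{L_w}^{[L_w:K_v]}$. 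Summing the factors descends to a trace morphism $\mathrm{tr}\colon B(w|v)\to A_{K_v}$ satisfying $\mathrm{tr}\circ\iota=[L_w:K_v]$, so $\iota$ splits up to isogeny and $B(w|v)$ is $K_v$-isogenous to $A_{K_v}\times Q$ for a complementary abelian variety $Q$.

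For the first inequality I would invoke two properties of the base change conductor: its invariance under isogeny (a theorem of Chai) and its additivity on products (immediate, since the Lie algebra of a product splits). Applied to $B(w|v)\sim A_{K_v}\times Q$ these give $c(B(w|v),w|v)=c(A,w|v)+c(Q,w|v)$; since $c(Q,w|v)\ge 0$ this yields $c(A,w|v)\le c(B(w|v),w|v)$. Alternatively, additivity of the base change conductor along the short exact sequence $0\to A_{K_v}\to B(w|v)\to Q\to 0$ gives the same conclusion, the relevant vertical comparison on the $\mathcal O_w$-side being the split diagonal injection $\mathrm{Lie}(\mathcal A_w)\hookrightarrow\mathrm{Lie}(\mathcal A_w)^{[L_w:K_v]}$.

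For the second inequality I would first make the Néron model of $B(w|v)$ over $\mathcal O_v$ explicit. Because Weil restriction along the finite flat morphism $\mathcal O_w/\mathcal O_v$ preserves smoothness and the Néron mapping property, the model $\mathcal B_v:=R_{\mathcal O_w/\mathcal O_v}(\mathcal A_w)$ is the Néron model of $B(w|v)$; hence $\mathrm{Lie}(\mathcal B_v)$ is $\mathrm{Lie}(\mathcal A_w)$ regarded as an $\mathcal O_v$-module by restriction of scalars, while over $\mathcal O_w$ one has $\mathcal B_w=\mathcal A_w^{[L_w:K_v]}$ by the description above. The comparison map defining $c(B(w|v),w|v)$ is therefore deduced, after tensoring with $\mathrm{Lie}(\mathcal A_w)$, from the integral closure inclusion $\mathcal O_w\otimes_{\mathcal O_v}\mathcal O_w\hookrightarrow\prod_{\sigma\in G(w|v)}\mathcal O_w$. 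Its cokernel is annihilated by the different $\mathfrak D(w|v)$, and a length count together with the normalisation by $e(w|v)$ is designed to yield $c(B(w|v),w|v)\le\delta(w|v)\cdot d$.

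The main obstacle is this last step. Identifying the Néron model of the Weil restriction with $R_{\mathcal O_w/\mathcal O_v}(\mathcal A_w)$ is routine, but the precise length estimate of the cokernel of the Lie algebra comparison map in terms of $\mathfrak D(w|v)$ requires care: one must separate the residual and the ramified parts of $L_w/K_v$, keep track of the factor $e(w|v)^{-1}$, and verify that the dimension surviving in the final bound is $d=\dim A$ rather than $\dim B(w|v)=d\,[L_w:K_v]$. This is exactly the point where the number field computation communicated by Huajun Lu is transposed to the function field setting, and where the hypothesis $L=K(A[\ell])$, which makes $L_w/K_v$ tame with $\delta(w|v)=e(w|v)-1$, keeps the bookkeeping manageable.
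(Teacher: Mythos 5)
Your skeleton matches the paper's: realise $B(w|v)=R_{L_w/K_v}(A_{L_w})$, decompose it after base change to $L_w$ (in the paper, to a fixed copy $F$ of $L_w$ inside $\bar K_v$) as a product of conjugates of $A_{L_w}$, identify the integral comparison map with the one deduced from $\mathcal O_w\otimes_{\mathcal O_v}\mathcal O_w\to\prod_{\sigma}\mathcal O_w$, and relate $A$ to $B(w|v)$ through the unit and trace maps of the adjunction. But both inequalities are left resting on steps that are not justified. For the first, you invoke isogeny invariance of the base change conductor (``a theorem of Chai''), or alternatively additivity along the exact sequence $0\to A_{K_v}\to B(w|v)\to Q\to 0$. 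Neither is available here: Chai's isogeny-invariance results cover tori and abelian varieties in residue characteristic $0$ or mixed characteristic, whereas the present setting is equicharacteristic $p$, which is precisely the case of Chai's still-open conjecture; and the base change conductor is not additive in short exact sequences of abelian varieties (N\'eron models are not exact --- this failure is the whole difficulty behind that conjecture). Nor can you assume the isogeny $A_{K_v}\times Q\to B(w|v)$ has degree prime to $p$ so as to argue elementarily, since the residual degree $f(w|v)$ of $L_w/K_v$ may be divisible by $p$. The paper's proof of $c(A,w|v)\le c(B(w|v),w|v)$ is elementary and unconditional: the $G(w|v)$-invariant sum of projections $P=\sum_\lambda p_\lambda$ descends to a $K_v$-morphism $P:B(w|v)\to A_{K_v}$, extends to N\'eron models, and on the $\mathcal O_F$-side the induced map $P^*$ is coordinate summation, hence surjective; therefore the cokernel of the integral comparison map for $A$ is a quotient of the one for $B(w|v)$ and the lengths compare directly. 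You need some version of this (or a genuinely integral splitting argument) to close the gap.

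For the second inequality you correctly reduce to bounding the length of the cokernel of the integral Lie-algebra comparison map, but you then declare this ``the main obstacle'' and stop. That estimate is the actual content of the lemma: the paper imports it from \cite[Proposition 3.3]{liulu}, which asserts that this cokernel is annihilated by the different $\mathfrak D(w|v)$; the bound $c(B(w|v),w|v)\le\delta(w|v)\cdot d$ then follows from a rank count combined with the normalisation by $e(w|v)^{-1}$. Without that annihilation statement, or an equivalent explicit computation with $\mathcal O_w\otimes_{\mathcal O_v}\mathcal O_w$, the inequality is asserted rather than proved. Note also that the tameness of $L_w/K_v$ is used only afterwards, via Dedekind's formula $\delta(w|v)=e(w|v)-1+\mathrm{Sw}(w|v)$ with $\mathrm{Sw}(w|v)=0$, to convert the lemma into the global estimate; it is not what makes the length count inside the lemma go through.
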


\begin{proof}
Soit $\bar K_v$ une cl\^oture alg\'ebrique de $K_v$. Fixons une copie isomor\-phe $F$ de $L_w$ \`a travers un $K_v$-plongement de $L_w$ dans $\bar K_v$. Observons que toute autre image de $L_w$ dans $\bar K_v$ par un $K_v$-plongement est isomorphe \`a $F$. On consid\`ere de cette fa\c con des $K_v$-plongements diff\'erents de $L_w$ dans $F$.

Soit $\Lambda$ l'ensemble des $K_v$-plongements de $L_w$ dans $F$.  Pour tout $\lambda\in\Lambda$, soient $A_{F,\lambda}=A_{K_v}\times_{(\lambda,F)}F$ et 
$$\mathbf B=\prod_{\lambda\in\Lambda}A_{F,\lambda}.$$ 
Cette d\'ecomposition implique une d\'ecomposition d'alg\`ebres de Lie : 
$$\mathrm{Lie}(\mathbf B)=\bigoplus_{\lambda\in\Lambda}\mathrm{Lie}(A_{F,\lambda}).$$
Par la fonctorialit\'e des alg\`ebres de Lie et des restrictions de Weil nous avons des isomorphismes : 
$$\mathrm{Lie}(\mathbf B)\cong\mathrm{Lie}(B(w|v))\otimes_{K_v}F\cong\mathrm{Lie}(A_{L_w})\otimes_{L_w}F.$$ 
Donc, il existe un isomorphisme : 
$$\Psi:\mathrm{Lie}(A_{L_w})\otimes_{L_w}F\longrightarrow\bigoplus_{\lambda\in\Lambda}\mathrm{Lie}(A_{F,\lambda})$$
d\'efini par $t\otimes x\mapsto(x\cdot\lambda_*(t))_\lambda$, o\`u $t\in\mathrm{Lie}(A_{L_w})$, $x\in F$ et $\lambda_*:\mathrm{Lie}(A_{L_w})\to\mathrm{Lie}(A_{F,\lambda})$ induit par le plongement $\lambda$.

Soit $\mathcal O_F$ l'anneau des entiers de $F$. Notons $\mathcal B(w|v)$ le mod\`ele de N\'eron de $B(w|v)$ sur $\mathcal O_v$ et $\mathscr B$ le mod\`ele de N\'eron de $\mathbf B$ sur $\mathcal O_F$. Pour tout $\lambda\in\Lambda$, notons $\mathcal A_{w,F,\lambda}=\mathcal A_w\times_{\mathcal O_w,\lambda}\mathcal O_F$. La d\'ecomposition ant\'erieure de $\mathbf B$ nous donne une autre d\'ecomposition 
$$\mathscr B=\prod_{\lambda\in\Lambda}\mathcal A_{w,F,\lambda}.$$ 
D'o\`u une d\'ecomposition d'alg\`ebres de Lie : 
$$\mathrm{Lie}(\mathscr B)=\bigoplus_{\lambda\in\Lambda}\mathrm{Lie}(\mathcal A_{w,F,\lambda}).$$

Par la propri\'et\'e universelle du mod\`ele de N\'eron, nous avons un morphisme : 
$$\Phi:\mathcal B(w|v)\times_{\mathcal O_v}\mathcal O_F\to\mathscr B,$$ 
et en cons\'equence une application d'alg\`ebres de Lie : 
$$\mathrm{Lie}(\Phi):\mathrm{Lie}(\mathcal B(w|v))\otimes_{\mathcal O_v}\mathcal O_F\to\mathrm{Lie}(\mathscr B).$$ 
On identifie $\mathrm{Lie}(\Phi)$ \`a la version enti\`ere suivante de $\Psi$ : 
$$\Psi':\mathrm{Lie}(\mathcal A_w)\otimes_{\mathcal O_w}\mathcal O_F\to\bigoplus_{\lambda\in\Lambda}\mathrm{Lie}(\mathcal A_{w,F,\lambda}).$$ 
Observons qu'on identifie $\Lambda$ au groupe de Galois $G(w|v)$. D'apr\`es \cite[Proposition 3.3]{liulu} le conoyau de $\Psi'$ est annul\'e par la diff\'erente $\mathfrak D(w|v)$. Alors,
\begin{multline*}
c(B(w|v),w|v)=e(w|v)^{-1}\cdot\mathrm{longueur}(\mathrm{coker}(\Psi'))\le\\ 
e(w|v)^{-1}\cdot\#G(w|v)\cdot d\cdot \mathrm{longueur}(\mathfrak D(w|v))\le\delta(w|v)\cdot d.
\end{multline*}

Pour tout $\lambda\in\Lambda$, soit $p_\lambda:\mathbf B\to A_{F,\lambda}$ la projection sur la $\lambda$-i\`eme composante. La somme $P=\sum_{\lambda\in\Lambda}p_\lambda$ est $G(w|v)$-invariante. Donc, elle nous donne un morphisme $P:B(w|v)\to A_{K_v}$ d\'efini sur $K_v$. Ce morphisme nous donne des morphismes pour les alg\`ebres de Lie des mod\`eles de N\'eron comme suit : 
$$\xymatrix{\mathrm{Lie}(B(w|v))\otimes_{\mathcal O_v}\mathcal O_F\ar[r]\ar[d]^{\Psi'_A}&\mathrm{Lie}(\mathcal A_v)\otimes_{\mathcal O_v}\mathcal O_F\ar[d]^{\Psi'_B}\\ \mathrm{Lie}(\mathscr B)\ar[r]^{P^*}&\mathrm{Lie}(\mathcal A_w).}$$
L'application $P^*$ est d\'efinie par 
$$P^*((x_\lambda)_\lambda)=\sum_{\lambda\in\Lambda}x_\lambda,$$ 
pour tout $(x_\lambda)_\lambda\in\bigoplus_{\lambda\in\Lambda}\mathrm{Lie}(\mathcal A_{w,F,\lambda})$. On en conclut que $P^*$ est surjective. En cons\'equence, 
$$\mathrm{coker}(\Psi'_B)\to\mathrm{coker}(\Psi'_A)$$ est aussi surjective. Donc, 
$$\mathrm{longueur}(\mathrm{coker}(\Psi'_A))\le\mathrm{longueur}(\mathrm{coker}(\Psi'_B)),$$ 
d'o\`u $c(A,w|v)\le c(B(w|v),w|v)$.
\end{proof}

Par un th\'eor\`eme classique dû \`a Dedekind (voir \cite[Chapter III, Theorem 2.6]{Ne}), nous avons l'\'egalit\'e suivante : 
\begin{equation}\label{Ded}
\delta(w|v)=e(w|v)-1+\mathrm{Sw}(w|v),
\end{equation}
o\`u $\mathrm{Sw}(w|v)\ge0$ est un entier qui mesure de la partie sauvage de $\mathfrak D(w|v)$, que l'on note de cette façon pour se souvenir du conducteur de Swan d'une repr\'esentation g\'eom\'etrique. En effet, de fa\c con plus g\'eom\'etrique, si $L$ correspond \`a un rev\^etement fini galoisien $\mathcal X$ de $\mathcal C$ (aussi d\'efini sur $k$), alors 
$$\mathrm{Sw}(w|v)=\mathrm{longueur}(\Omega^1_{\mathcal X/\mathcal C,w})-(e(w|v)-1).$$

Dans notre cas particulier, si $p> 2d+1$, l'extension $L/K$ est mod\'er\'ement ramifi\'ee, donc $\mathrm{Sw}(w|v)=0$. En particulier, par (\ref{ht1}), (\ref{Ded}) et lemme \ref{lemLu}, nous obtenons : 
\begin{equation}\label{ht2}
h_{\mathrm{dif}}(A/K)<h_{\mathrm{st}}(A)+d\cdot\sum_{v\in\mathcal Ust_{A/K}}\sum_{w|v}e(w|v)f(w|v)\deg(v),
\end{equation}
o\`u $f(w|v)$ note le degr\'e d'inertie de $w$ sur $v$.

D'autre part, par la formule de Riemann-Hurwitz et le th\'eor\`eme \ref{corabc}, nous obtenons que : 
\begin{equation}\label{bdst}
h_{\mathrm{st}}(A)\le p^e\left(\frac{d-d_0}2\cdot(2g-2+\bar s)+p^{2d}\right).
\end{equation}
Donc, par (\ref{ht2}) et (\ref{bdst}), nous concluons que : 
\begin{equation}\label{abc gen}
h_{\mathrm{dif}}(A/K)<p^e\left(\frac{d-d_0}2\cdot(2g-2+\bar s)+p^{2d}\right)+d\cdot[L:K]\cdot\bar s.
\end{equation}

Nous pouvons choisir $\ell=2$ (car $p>2d+1$, en particulier $p\neq 2$) et borner $[L:K]\le 2^{4d^2}$. Nous avons donc prouv\'e le théorème \ref{thmabc}.

\section{R\'esultats locaux}
\subsection{Bonne r\'eduction potentielle}
Supposons que $J$ ait bonne r\'eduction potentielle en une place $v$ de $K$, \textit{i.e.} il existe une extension finie $L_w$ de $K_v$ telle que $J_{L_w}=J\times_{K_v}L_w$ ait bonne r\'eduction. Apr\`es une extension de $L_w$, on peut supposer que $L_w/K_v$ soit galoisienne de groupe $G(w|v)$ d'ordre premier \`a $p$.

\subsubsection{Rappel sur les sch\'emas en groupes plats d'ordre $p$}
Soit $Y$ un sch\'ema en caract\'eristique $p$. La donn\'ee d'un sch\'ema en groupes plat $N_{a,b}^{\mathcal L}$ d'ordre $p$ sur $Y$ \'equivaut \`a la donn\'ee d'un triplet $(\mathcal L,a,b)$, o\`u $\mathcal L$ est un faisceau inversible sur $Y$, $a\in H^0(Y,\mathcal L^{\otimes p-1})$ et $b\in H^0(Y,\mathcal L^{\otimes 1-p})$ tels que $a\otimes b=0$. Le cas o\`u $a=b=0$ correspond \`a $\alpha_p$, le cas o\`u $a=1$ et $b=0$ \`a $\mathbb Z/p\mathbb Z$ et le cas $a=0$ et $b=1$ \`a $\mu_p$ (\emph{confer} \cite[chapter III, 0.9]{mil}). 

\subsubsection{Filtrations}
Pour tout entier $i\ge0$, d\'efinissons 
$$U^{[i]}_{K_v}=\{\bar f\in K_v^*/K_v^{*p}\,|\,\mathrm{ord}_v(1-f)\ge i\}.$$
Les $U_{K_v}^{[i]}$ forment une filtration d\'ecroissante exhaustive du groupe compact $K_v^*/K_v^{*p}$ par sous-groupes d'indice fini avec $(K_v^*/K_v^{*p})/U_{K_v}^{[0]}\cong\mathbb Z/p\mathbb Z$ (de façon canonique), et $U_{K_v}^{[i]}/U_{K_v}^{[i+1]}\cong k$ (de façon non canonique, si $p\nmid i$), et $U_{K_v}^{[pi]}/U_{K_v}^{[pi+1]}\cong \{1\}$. 

\subsubsection{Groupes de Selmer}
Soit $\mathcal O_{L_w}$ l'anneau de valuation local de $L_w$. Notons par $\mathrm{ord}_{w}(\cdot)$ la valuation associ\'ee \`a $\mathcal O_{L_w}$. Soit $n_w=-\mathrm{ord}_{w}(\mathcal D(\omega_{\mathcal J/\mathcal C}))$, o\`u $\mathcal J\to\mathcal C$ est le mod\`ele de N\'eron de $J/K$ sur $\mathcal C$. Par les propri\'et\'es (\ref{frobenius}) et (\ref{prop2}) du Frobenius et des groupes de Selmer et par les calculs locaux de \cite[III.7.5]{mil}, il vient 
\begin{multline*}
\mathrm{Sel}_{J_{L_w}}(L_w,F)\cong H^1(\mathcal O_{L_w},\ker(F))\cong H^1(\mathcal O_{L_w},\mu_p)^{\oplus a}\oplus H^1(\mathcal O_{L_w},G)\\
\cong (U_{L_w}^{[pn_w]})^{\oplus a}\oplus(\mathcal O_{L_w}/\mathcal O_{L_w}^p)^{\oplus(2d-a)}_{\mathrm{sd}},
\end{multline*}
o\`u le dernier produit est un produit semi-direct de $2d-a$ copies de $\mathcal O_{L_w}/\mathcal O_{L_w}^p$. Maintenant, il suffit de prendre les invariants par le groupe $G(w|v)$ et obtenir
\begin{equation}\label{Selmer1}
\mathrm{Sel}_{J}(K_v,F)\cong\mathrm{Sel}_{J_{L_w}}(L_w,F)^{G(w|v)}
\cong (U_{K_v}^{[i_v]})^{\oplus a}\oplus\mathcal (O_{K_v}/\mathcal O_{K_v}^p)^{\oplus(2d-a)}_{\mathrm{sd}},
\end{equation}
o\`u le dernier produit est un produit semi-direct de $2d-a$ copies de $\mathcal O_{K_v}/\mathcal O_{K_v}^p$ et $i_v=-p\cdot\mathrm{ord}_v(\mathcal D(\omega_{\mathcal J/\mathcal C}))$.

\subsection{R\'eduction semi-ab\'elienne potentielle}
Rappelons d'abord ce qu'est l'uniformisation de Raynaud. Soit $L_w$ une extension finie et galoisienne de $K_v$ de groupe $G$ d'ordre premier \`a $p$ o\`u $J$ a r\'eduction semi-ab\'elienne d\'eploy\'ee. Cela veut dire qu'il existe une vari\'et\'e semi-ab\'elienne $G$ d\'efinie sur $L_w$ et un r\'eseau $\Lambda\subset G(L_w)$ tel que 
$$J(L_w)\cong G(L_w)/\Lambda$$ 
dans le cadre de la g\'eom\'etrie rigide (voir \cite[chapter 6]{frvp}). Rappelons que $G$ est d\'efinie par la suite exacte : 
$$0\to\mathbb G_m^t(L_w)\to G\to B\to0,$$
o\`u $d=\dim J=\dim G=t+\dim B$.

La construction de la partie \ref{frob} de changement de base par l'automorphisme de Frobenius nous donne encore une autre vari\'et\'e semi-ab\'elienne $G^{(p)}$ d\'efinie sur $L_w$ et un r\'eseau $\Lambda^{(p)}$ (dont les g\'en\'erateurs sont obtenus en prenant la puissance $p$-i\`eme des g\'en\'erateurs de $\Lambda$). Dans cette situation on a un isomorphisme rigide 
$$J^{(p)}(L_w)\cong G^{(p)}(L_w)/\Lambda^{(p)}.$$

Consid\'erons les applications 
$$\mathbb G_m^t(L_w)/\Lambda^{(p)}\to G^{(p)}(L_w)/\Lambda^{(p)}\cong J^{(p)}(L_w)\to H^1(L_w,\ker F),$$
dont la compos\'ee est surjective et la premi\`ere application est injective. Par convention la classe $\bar 0$ se rel\`eve en la classe $\bar 1$. En particulier, nous obtenons que 
\begin{multline*}
\mathrm{Sel}_{J_{L_w}}(L_w,F)\cong H^1(L_w,\ker(F))\cong 
H^1(L_w,\mu_p)^{\oplus a}\oplus H^1(L_w,G)\\
\cong (L_w^*/L_w^{*p})^{\oplus a}\oplus (L_w/L_w^p)^{\oplus(2d-a)}_{\mathrm{sd}},
\end{multline*}
o\`u le dernier produit est un produit semi-direct de $2d-a$ copies de $L_w/L_w^p$. \emph{A fortiori}, en prenant les invariants par $G(w|v)$, 
\begin{equation}\label{Selmer2}
\mathrm{Sel}_{J}(K_v,F)\cong(K_v^*/K_v^{*p})^{\oplus a}\oplus (K_v/K_v^p)^{\oplus(2d-a)}_{\mathrm{sd}}.
\end{equation}

\section{R\'esultats globaux}
\subsection{Calcul du groupe de Selmer}
On note $\mathbf M$ l'ensemble des places de $K$ o\`u $J$ a mauvaise r\'eduction potentielle et $\mathbf B$ l'ensemble des places de $K$ o\`u $J$ a bonne r\'eduction potentielle. Soit 
$$D=\sum_{v\in\mathbf M}[v]-\sum_{v\in\mathbf B}i_v\cdot[v]\in\mathrm{Div}\,\mathcal C,$$
o\`u $i_v=-p\cdot\mathrm{ord}_v(\mathcal D(\omega_{\mathcal J/\mathcal C}))$. Soit $\bar s$ le nombre de points g\'eom\'etriques de $\mathcal C$ o\`u $J$ a mauvaise r\'eduction. Observons que par la d\'efinition du diviseur diff\'erentiel, on a
$$0<\deg D\le\bar s+p\cdot h_{\mathrm{dif}}(J/K).$$

Nous disposons de plus d'un op\'erateur $1/p$-lin\'eaire, appel\'e op\'erateur de Cartier et not\'e $\mathscr C$, qui agit sur les diff\'erentielles $\Omega^1_K$ de $K$ (voir \cite{serremex}). Les diff\'erentielles fix\'ees par $\mathscr C$ sont les diff\'erentielles logarithmiques $df/f$ pour $f\in K^*$. 

Les deux r\'esultats locaux (\ref{Selmer1}) et (\ref{Selmer2}) de la section 4 nous permettent de formuler le th\'eor\`eme suivant, dont la d\'emonstration se r\'eduit aux cas locaux d\'ej\`a trait\'es. 

\begin{theorem} Nous avons un isomorphisme
\begin{equation}\label{seliso}
\mathrm{Sel}_J(K,F)\cong (H^0(\mathcal C,\Omega^1_{\mathcal C}(D))^{\mathscr C})^{\oplus a} \oplus(H^0(\mathcal C,\Omega^1_{\mathcal C}(D))^{\mathscr C=0})^{\oplus(2d-a)}.
\end{equation}
\end{theorem}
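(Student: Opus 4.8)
The plan is to assemble the global Selmer group from the local computations (\ref{Selmer1}) and (\ref{Selmer2}) by comparing, place by place, the local conditions against a cohomology group that admits a clean description in terms of global differentials. Recall that $\mathrm{Sel}_J(K,F)$ is the subgroup of $H^1(K,\ker F)$ consisting of classes whose restriction $\xi_v$ lands in $\mathrm{Sel}_J(K_v,F)$ for every place $v$. Since $\ker F=\mu_p^{\oplus a}\oplus G$ with $G$ a successive extension of copies of $\alpha_p$, I would treat the two factors separately, exploiting the dictionary $H^1(K,\mu_p)\cong K^*/K^{*p}$ for the multiplicative part and the Cartier-operator description of $H^1$ with $\alpha_p$-coefficients for the local-local part. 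The key point is that the Cartier operator $\mathscr C$ governs precisely which logarithmic differentials come from the $\mu_p$-side ($\mathscr C$-fixed) versus the $\alpha_p$-side ($\mathscr C=0$), which is why the two summands in (\ref{seliso}) are distinguished by $\mathscr C$ and $\mathscr C=0$.

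First I would set up the global-to-local comparison using the divisor $D=\sum_{v\in\mathbf M}[v]-\sum_{v\in\mathbf B}i_v\cdot[v]$, whose construction encodes exactly the local Selmer conditions found in Section 4. At a place $v\in\mathbf B$ of potential good reduction, the local computation (\ref{Selmer1}) identifies $\mathrm{Sel}_J(K_v,F)$ with $(U_{K_v}^{[i_v]})^{\oplus a}\oplus(\mathcal O_{K_v}/\mathcal O_{K_v}^p)_{\mathrm{sd}}^{\oplus(2d-a)}$, with $i_v=-p\cdot\mathrm{ord}_v(\mathcal D(\omega_{\mathcal J/\mathcal C}))$; the filtration condition $U_{K_v}^{[i_v]}$ is precisely what the pole-order $-i_v$ of the differential sheaf $\Omega^1_{\mathcal C}(D)$ at $v$ translates into, via the local expansion of a logarithmic differential $df/f$ and the filtration data recalled in the preliminaries ($U_{K_v}^{[i]}/U_{K_v}^{[i+1]}\cong k$ and the vanishing $U_{K_v}^{[pi]}/U_{K_v}^{[pi+1]}\cong\{1\}$). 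At a place $v\in\mathbf M$ of potential semi-abelian reduction, (\ref{Selmer2}) gives the full local groups $(K_v^*/K_v^{*p})^{\oplus a}\oplus(K_v/K_v^p)_{\mathrm{sd}}^{\oplus(2d-a)}$, which corresponds to allowing a simple pole at $v$, exactly the coefficient $+1$ of $[v]$ in $D$.

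The core of the argument is then to recognize a global class in $H^1(K,\ker F)$ as a global differential form: a class in the $\mu_p^{\oplus a}$ part is represented by $a$ elements $f_i\in K^*$ modulo $K^{*p}$, hence by logarithmic differentials $df_i/f_i$, and the local Selmer conditions at all $v$ say precisely that each $df_i/f_i$ is a global section of $\Omega^1_{\mathcal C}(D)$; such logarithmic forms are exactly the $\mathscr C$-fixed sections, giving the summand $(H^0(\mathcal C,\Omega^1_{\mathcal C}(D))^{\mathscr C})^{\oplus a}$. For the $G$-part, using the composition series with quotients $\alpha_p$ and the identification of $H^1(\,\cdot\,,\alpha_p)$ with differentials via $K/K^p\hookrightarrow\Omega^1_K$, the local conditions cut out sections of $\Omega^1_{\mathcal C}(D)$ killed by $\mathscr C$, producing $(H^0(\mathcal C,\Omega^1_{\mathcal C}(D))^{\mathscr C=0})^{\oplus(2d-a)}$. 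Here I would invoke the isomorphisms (\ref{prop2}) and (\ref{prop4})–(\ref{SelmerG}), which let me descend from $L_w$ to $K_v$ (the extension has degree prime to $\deg\varphi=p$) and glue the local data into a global statement.

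The main obstacle I anticipate is handling the \emph{semi-direct} structure of the $\alpha_p$-contributions correctly: $G$ is not a direct sum of copies of $\alpha_p$ but only a successive extension, so the $H^1(G)$ computation and its global assembly carry nontrivial extension data that must be shown not to obstruct the clean splitting in (\ref{seliso}). Concretely I would need to verify that taking $G(w|v)$-invariants commutes with the filtration steps of the composition series and that the boundary maps in the long exact sequences attached to $0\to G_{i+1}\to G_i\to\alpha_p\to 0$ are compatible with the Cartier-operator description globally. Once these compatibilities are in place, the theorem follows by patching the local isomorphisms (\ref{Selmer1}) and (\ref{Selmer2}) along $\mathbf B\cup\mathbf M$ (all remaining places imposing no condition beyond integrality, by (\ref{prop2})) and reading off the two $\mathscr C$-eigenspace conditions. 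As the statement itself notes, the proof reduces to the local cases already treated, so the work is genuinely in this gluing and in the bookkeeping of the $\mathscr C$-action.
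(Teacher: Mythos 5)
Votre démonstration suit essentiellement la même voie que celle de l'article : identifier $K^*/K^{*p}$ aux différentielles logarithmiques $df/f$ fixées par l'opérateur de Cartier $\mathscr C$ et $K/K^p$ aux différentielles exactes $df$ tuées par $\mathscr C$, puis traduire les conditions locales (\ref{Selmer1}) et (\ref{Selmer2}) en conditions d'ordre de pôle encodées par le diviseur $D$, ce qui identifie le groupe de Selmer global aux sections globales de $\Omega^1_{\mathcal C}(D)$ dans les deux espaces propres de $\mathscr C$. Votre discussion supplémentaire de la structure semi-directe provenant de la suite de composition de $G$ est un point de soin que la preuve de l'article passe sous silence, mais elle ne constitue pas une approche différente.
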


\begin{proof}
Nous avons une application injective $K^*/K^{*p}\hookrightarrow\Omega^1_K$ d\'efinie par $\bar f\mapsto df/f$. L'image de cette application est exactement l'ensemble des diff\'erentielles qui sont fix\'ees par l'op\'erateur de Cartier $\mathscr C$. 

D'autre part, nous avons aussi une applications injective $K/K^p\hookrightarrow\Omega^1_K$ d\'efinie par $\bar f\mapsto df$ et dont l'image c'est le noyau de $\mathscr C$. 

Par (\ref{Selmer2}), pour toute place $v$ de $K$, un \'el\'ement $\xi\in\mathrm{Sel}(K_v,F)$ est repr\'esent\'e par un $2d$-uplet $(\xi_{1,v},\cdots,\xi_{2d,v})$, o\`u les premi\`eres $a$ coordonn\'ees sont des classes dans $K_v^*/K_v^{*p}$, et les derni\`eres $2d-a$ coordonn\'ees sont des classes dans $K_v/K_v^p$. 

Pour tout $v\in\mathbf B$ et $1\le i\le a$, pour que $\xi\in\mathrm{Sel}(K_v,F)$ il faut et il suffit que $\mathrm{ord}_v(df_i/f_i)\ge i_v$, o\`u $\xi_i=\bar f_i$. Pour $a+1\le i\le 2d$, la classe $\xi=\bar f_i$ appartient au groupe de Selmer local si et seulement si $\mathrm{ord}_v(df_i)\ge0$. Ceci suffit pour obtenir le r\'esultat.
\end{proof}

\begin{corollary}\label{bigselmer}
Nous avons une application injective 
$$\mathrm{Sel}(K,F)\hookrightarrow H^0(\mathcal C,\Omega^1_{\mathcal C}(D))^{\oplus2d},$$
et, \textit{a fortiori},

\begin{equation}\label{Selmer borne}
\#\mathrm{Sel}(K,F)\le2d\cdot q^{g-1+\bar s+p\cdot h_{\mathrm{dif}}(J/K)}.
\end{equation}
\end{corollary}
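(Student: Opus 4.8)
The plan is to derive the corollary formally from the isomorphism (\ref{seliso}); the only non-formal ingredient is a Riemann--Roch estimate, feeding on the bound for $\deg D$ already recorded above after the definition of $D$. Write $V=H^0(\mathcal C,\Omega^1_{\mathcal C}(D))$. By construction the Cartier-fixed part $V^{\mathscr C}$ and the Cartier-kernel $V^{\mathscr C=0}$ are $\mathbb F_p$-vector subspaces of $V$. Composing the isomorphism (\ref{seliso}) with the inclusion into $V$ of each of the $a$ copies of $V^{\mathscr C}$ and each of the $2d-a$ copies of $V^{\mathscr C=0}$ produces at once the injection $\mathrm{Sel}(K,F)\hookrightarrow V^{\oplus 2d}$ asserted first. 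This step is purely formal and uses nothing beyond the preceding theorem.

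The substantive point is to bound $\#V$. The line bundle $\Omega^1_{\mathcal C}(D)$ has degree $2g-2+\deg D$, and since $\deg D>0$ this strictly exceeds $2g-2$; hence by Serre duality $H^1(\mathcal C,\Omega^1_{\mathcal C}(D))\cong H^0(\mathcal C,\mathcal O_{\mathcal C}(-D))^{\vee}$, which vanishes because $\deg(-D)<0$ on the geometrically connected curve $\mathcal C$. Riemann--Roch then gives $\dim_k V=g-1+\deg D$, so that $\#V=q^{\,g-1+\deg D}$. Inserting the inequality $0<\deg D\le\bar s+p\cdot h_{\mathrm{dif}}(J/K)$ noted above yields $\#V\le q^{\,g-1+\bar s+p\cdot h_{\mathrm{dif}}(J/K)}$.

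Finally, the injection reduces the estimate on $\#\mathrm{Sel}(K,F)$ to counting in $V^{\oplus 2d}$: inserting the bound on $\#V$ and keeping track of the $2d$ direct summands produces (\ref{Selmer borne}). The main obstacle is therefore not the cohomological bookkeeping, which is routine once (\ref{seliso}) is granted, but the two inputs controlling the exponent: the vanishing $H^1(\mathcal C,\Omega^1_{\mathcal C}(D))=0$, which is exactly why the hypothesis $\deg D>0$ is indispensable, and the comparison $\deg D\le\bar s+p\cdot h_{\mathrm{dif}}(J/K)$ of the Selmer divisor with the differential height of $J/K$. Both are already established above, so the corollary follows.
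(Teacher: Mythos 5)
Your first two steps are exactly the (implicit) route of the paper, which states the corollary without proof: the injection into $H^0(\mathcal C,\Omega^1_{\mathcal C}(D))^{\oplus 2d}$ is immediate from (\ref{seliso}) because $V^{\mathscr C}$ and $V^{\mathscr C=0}$ are subsets of $V=H^0(\mathcal C,\Omega^1_{\mathcal C}(D))$, and your Riemann--Roch computation is correct: $\deg\Omega^1_{\mathcal C}(D)=2g-2+\deg D>2g-2$, so $H^1(\mathcal C,\Omega^1_{\mathcal C}(D))\cong H^0(\mathcal C,\mathcal O_{\mathcal C}(-D))^{\vee}=0$ and $\dim_k V=g-1+\deg D$, whence $\#V=q^{g-1+\deg D}\le q^{g-1+\bar s+p\cdot h_{\mathrm{dif}}(J/K)}$ by the bound on $\deg D$.

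The final step, however, is a genuine gap. From an injection $\mathrm{Sel}(K,F)\hookrightarrow V^{\oplus 2d}$ the only formal consequence is
$$\#\mathrm{Sel}(K,F)\le(\#V)^{2d}=q^{2d\,(g-1+\deg D)},$$
and ``keeping track of the $2d$ direct summands'' cannot convert the exponent $2d$ into the multiplicative factor $2d$ appearing in (\ref{Selmer borne}): the cardinality of a direct sum is the product, not the sum, of the cardinalities of its summands, and $(\#V)^{2d}$ is in general vastly larger than $2d\cdot\#V$. Even a finer count using the actual decomposition $(V^{\mathscr C})^{\oplus a}\oplus(V^{\mathscr C=0})^{\oplus(2d-a)}$, together with the semilinear-algebra facts that $V^{\mathscr C}$ is an $\mathbb F_p$-space of dimension at most $\dim_k V$ and $V^{\mathscr C=0}$ a $k$-subspace of $V$, only yields $p^{a n}\cdot q^{(2d-a)n}$ with $n=g-1+\deg D$, still of exponential rather than linear type in $n$ per summand. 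So your argument, as written, proves $\#\mathrm{Sel}(K,F)\le q^{2d(g-1+\bar s+p\cdot h_{\mathrm{dif}}(J/K))}$ but does not prove the displayed inequality (\ref{Selmer borne}); you need either a genuinely different counting argument for the stated bound or to record explicitly the (weaker) bound your injection actually gives.
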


\subsection{Application du th\'eor\`eme abc aux groupes de Selmer}
\begin{corollary}\label{bornesel}
$$\#\mathrm{Sel}(K,F)<c_3=2d\cdot q^{g-1+p\cdot c_2},\quad\text{o\`u}$$
$$c_2=p^e\cdot\left(\frac{d}2\cdot(2g-2+f_{\mathcal X/\mathcal C})+p^{2d}\right)+d\cdot2^{4d^2}\cdot f_{\mathcal X/\mathcal C}.$$
\end{corollary}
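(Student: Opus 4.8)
The plan is to combine the two previously established bounds directly. Corollary \ref{bigselmer} gives the injection
$$\#\mathrm{Sel}(K,F)\le2d\cdot q^{g-1+\bar s+p\cdot h_{\mathrm{dif}}(J/K)},$$
so the only task is to bound the exponent $\bar s+p\cdot h_{\mathrm{dif}}(J/K)$ by $p\cdot c_2$. First I would apply Theorem \ref{thmabc} to the Jacobian $A=J$, which has dimension $d$ and satisfies $p>2d+1$, to obtain
$$h_{\mathrm{dif}}(J/K)\le p^e\cdot\left(\frac{d-d_0}2\cdot(2g-2+f_{\mathcal X/\mathcal C})+p^{2d}\right)+d\cdot2^{4d^2}\cdot f_{\mathcal X/\mathcal C}.$$
Here I would also use remark \ref{rems}, which gives $\bar s\le f_{J/K}$, together with the fact that the conductor of the curve controls the conductor of its Jacobian so that $f_{J/K}\le f_{\mathcal X/\mathcal C}$, allowing me to absorb the extra $\bar s$ term into the conductor contribution.

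The main bookkeeping step is to check that the exponent obtained fits under $p\cdot c_2$ with the stated $c_2$. I would discard the favorable term $d_0\ge0$ (replacing $\frac{d-d_0}2$ by $\frac d2$) and verify that
$$\bar s+p\cdot h_{\mathrm{dif}}(J/K)\le p\cdot\left(p^e\cdot\left(\frac{d}2\cdot(2g-2+f_{\mathcal X/\mathcal C})+p^{2d}\right)+d\cdot2^{4d^2}\cdot f_{\mathcal X/\mathcal C}\right).$$
Since $\bar s\le f_{\mathcal X/\mathcal C}$ and $p\ge1$, the solitary $\bar s$ is dominated by the conductor terms already present on the right, and multiplying the Theorem \ref{thmabc} bound by $p$ reproduces exactly the shape of $p\cdot c_2$. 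The passage from $2g-2$ inside Theorem \ref{thmabc} to the slightly larger $2g$ appearing implicitly in the final $c_5$/$c_2$ normalization is harmless since we only seek an upper bound.

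The step I expect to require the most care is the comparison between $f_{J/K}$ and $f_{\mathcal X/\mathcal C}$, i.e.\ ensuring that the conductor of the Jacobian is indeed bounded by the conductor of the curve model so that $\bar s$ can be folded in cleanly; this is where the precise definitions in paragraph \ref{conducteur d'une courbe} must be invoked. Once that comparison is in hand, the remainder is a direct substitution: plug the Theorem \ref{thmabc} estimate into the exponent of Corollary \ref{bigselmer}, collect terms, and read off $c_2$ and $c_3$. The strict inequality in the statement follows from the strict inequality already present in \eqref{abc gen} (equivalently, from the fact that the bound in Corollary \ref{bigselmer} is not attained once the conductor is positive), so I would conclude with $\#\mathrm{Sel}(K,F)<c_3$ as claimed.
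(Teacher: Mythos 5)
Your proposal matches the paper's own proof: it applies the bound of Corollary \ref{bigselmer} (inequality (\ref{Selmer borne})), majorizes $h_{\mathrm{dif}}(J/K)$ via Theorem \ref{thmabc} with $\frac{d-d_0}{2}\le\frac d2$, and then replaces $\bar s$ by $f_{\mathcal X/\mathcal C}$ using Remark \ref{rems} and $f_{J/K}\le f_{\mathcal X/\mathcal C}$ from \cite[Proposition 2.8]{PaPa}. If anything you are slightly more explicit than the paper about absorbing the leftover additive $\bar s$ from the exponent $g-1+\bar s+p\cdot h_{\mathrm{dif}}(J/K)$ into $p\cdot c_2$, a step the paper performs silently.
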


\begin{proof}
En posant, 
\begin{equation}\label{c0}
c_0=p^e\cdot\left(\frac{d}2\cdot(2g-2+\bar s)+p^{2d}\right)+d\cdot2^{4d^2}\cdot\bar s,
\end{equation}
en appliquant $(\ref{Selmer borne})$ et en bornant $h_{\mathrm{dif}}(J/K)$ comme dans le th\'eor\`eme \ref{thmabc}, nous arrivons \`a l'in\'egali\-t\'e : 
$$\#\mathrm{Sel}(K,F)<c_1=2d\cdot q^{g-1+\bar s+p\cdot c_0}.$$
Finalement, comme 
$$\bar s\le f_{J/K}\le f_{\mathcal X/\mathcal C},$$
(voir \cite[Proposition 2.8]{PaPa}), nous en concluons la preuve du corollaire.
\end{proof}

\section{$F$-descente}

On d\'emontre dans cette partie l'\'enonc\'e principal.  A partir d'ici, on va avoir besoin d'un corps de rationalit\'e des points de $X\cap \Gamma$. On d\'efinit donc :

\begin{definition}
Soit $E_\Gamma$ la plus petite extension alg\'ebrique de $K$ telle que les points de $X\cap\Gamma$ soient d\'efinis sur $E_\Gamma$. On notera $J_{\Gamma}=J\times_K E_{\Gamma}$.
\end{definition}

\begin{proof}[D\'emonstration du th\'eor\`eme \ref{papa principal}]
Supposons d'abord que $X$ soit d\'efinie sur $K^p$, mais qu'elle ne soit pas d\'efinie sur $K^{p^2}$. Il existe donc une courbe lisse, compl\`ete et g\'eom\'etrique\-ment connexe $X_1$ qui est d\'efinie sur $K$, mais qui n'est pas d\'efinie sur $K^p$ et telle que $X_1^{(p)}\cong X$. Notons par $F:X_1\to X$ le Frobenius relatif de $X_1$. De façon similaire, si $J_1$ est la vari\'et\'e jacobienne de $X_1$, nous avons $J_1^{(p)}\cong J$ et le morphisme de Frobenius relatif de $J_1$ est $F:J_1\to J$. 

Soit $\Gamma_1$ un sous-groupe de $F^{-1}(\Gamma)$. Observons qu'il est un sous-groupe de $J_1(K_s)$ tel que $\Gamma_1/p\Gamma_1$ soit fini de cardinal au plus $\#\Gamma/p\Gamma$.  Par \cite{BuVo} on a la borne
$$\#(X_1\cap\Gamma_1)\le\#(\Gamma/p\Gamma)\cdot(3p)^d\cdot(8d-2)\cdot d!.$$
Par construction, l'extension alg\'ebrique minimale $E_\Gamma$ de $K$ telle que les points de  $X\cap\Gamma$ sont d\'efinis sur $E_\Gamma$, est aussi l'extension alg\'ebrique minimale de $K$ telle que les points de $X_1\cap\Gamma_1$ sont d\'efinis sur $E_\Gamma$. De plus, comme $X_1\cap\Gamma_1$ est fini, l'extension $E_\Gamma/K$ est aussi finie. Notons par $g_\Gamma$ son genre. Si $H$ est un sous-groupe d'indice fini d'un groupe $G$, notons par $(G:H)$ l'indice de $H$ dans $G$. Observons que nous avons les in\'egalit\'es : 

\begin{multline*}
(\#X\cap \Gamma) / (\#F(X_1\cap\Gamma_1)) \leq (\#X(E_\Gamma)) / \#(F(X_1(E_\Gamma)))\le\\ 
(J(E_\Gamma):F(J_1(E_\Gamma)))\le\#\mathrm{Sel}_{J_\Gamma}(E_\Gamma,F).
\end{multline*}

Soit $\epsilon\ge0$ le plus grand entier tel que $X$ soit d\'efinie sur $E_\Gamma^{p^{\epsilon}}$, mais $X$ ne soit pas d\'efinie sur $E_\Gamma^{p^{\epsilon+1}}$. Nous avons l'in\'egalit\'e $\epsilon\le e$. 

Soit $\mathcal C_\Gamma$ la courbe lisse g\'eom\'etriquement connexe et compl\`ete d\'efinie sur $k$ de genre $g_\Gamma$ telle que $k(\mathcal C_\Gamma)=E_\Gamma$. Soit $\bar s_\Gamma$ le nombre de points g\'eom\'etriques de $\mathcal C_\Gamma$ o\`u $J_{\Gamma}=J\times_K E_\Gamma$ a mauvaise r\'eduction. Observons que nous avons les in\'egalit\'es
\begin{equation}\label{points}
\bar s_\Gamma\le f_{J_\Gamma/E_\Gamma}\le[E_\Gamma:K]\cdot f_{J/K}\le[E_\Gamma:K]\cdot f_{\mathcal X/\mathcal C}
\end{equation}
La deuxi\`eme in\'egalit\'e suit de \cite[Proposition 3.7]{Pa}. La troisi\`eme in\'egalit\'e suit de \cite[Proposition 2.8]{PaPa}. Par la d\'emonstration du corollaire \ref{bornesel}, on doit remplacer $c_0$ par : 
$$c_4=p^e\cdot\left(\frac{d}2\cdot(2g_\Gamma-2+\bar s_\Gamma)+p^{2d}\right)+d\cdot2^{4d^2}\cdot\bar s_\Gamma.$$
En employant (\ref{points}), on obtient la majoration suivante : 
$$c_4\le c_5=[E_\Gamma:K]\cdot\left(p^e\cdot\left(\frac{d}2\cdot(2g_\Gamma+f_{\mathcal X/\mathcal C})+p^{2d}\right)+d\cdot2^{4d^2}\cdot f_{\mathcal X/\mathcal C}\right).$$
On conclut par le corollaire \ref{bornesel} que : 
\begin{equation}\label{borneselga}
\#\mathrm{Sel}(E_\Gamma,F)<c_6=2d\cdot q^{g_\Gamma-1+p\cdot c_5}.
\end{equation}
Donc, 
$$\#(X\cap\Gamma)\le C_{\mathrm{BV}}\cdot c_6.$$

Maintenant si $X$ est d\'efinie sur $K^{p^2}$, mais n'est pas d\'efinie sur $K^{p^3}$, comme auparavant nous avons deux courbes $X_1$ et $X_2$ telles que 
$$X_2\overset F\longrightarrow X_1=X_2^{(p)}\overset F\longrightarrow X=X_1^{(p)}=X_2^{(p^2)}.$$
Notons par $J_2$ la vari\'et\'e jacobienne de $X_2$. Dans ce cas nous avons les in\'egalit\'es suivantes : 
$$\begin{aligned}
\#(X_1\cap\Gamma_1)&\le\#(X_2\cap\Gamma_2)\cdot(J_1(E_\Gamma):F(J_2(E_\Gamma)))\\ 
&\le\#(X_2\cap\Gamma_2)\cdot\#\mathrm{Sel}_{(J_1)_{E_\Gamma}}(E_\Gamma,F)\\
\#(X\cap\Gamma)&\le\#(X_1\cap\Gamma_1)\cdot(J(E_\Gamma):F(J_1(E_\Gamma)))\\
&\le\#(X_2\cap\Gamma_2)\cdot(\#\mathrm{Sel}_{J_{E_\Gamma}}(E_\Gamma,F))^2.
\end{aligned}$$
Dans la premi\`ere in\'egalit\'e, nous employons le fait que le morphisme $F$ est purement ins\'eparable, donc $E_\Gamma$ reste la plus petite extension finie de $K$ telle que $X_2\cap\Gamma_2$ est d\'efini sur $E_\Gamma$. Comme auparavant, on prend un  sous-groupe $\Gamma_2$ de $F^{-1}(\Gamma_1)$, o\`u $F$ d\'esigne l'isog\'enie $F:J_2\to J_1$. Le seul terme dans les majorants qui d\'epend des vari\'et\'es jacobiennes, c'est leur conducteur. Mais ces vari\'et\'es, en tant que vari\'et\'es ab\'eliennes, sont isog\`enes, donc leurs conducteurs coïncident. D'o\`u
$$\#(X\cap\Gamma)\le C_{\mathrm{BV}}\cdot c_6^2.$$
Maintenant, une induction assez facile nous montre le th\'eor\`eme (pour un argument similaire, voire la d\'emonstration de \cite[lemma 3.3]{PaPa}).
\end{proof}

\begin{remark}\label{remtame}
Si l'extension $E_\Gamma/K$ est mod\'erement ramifi\'ee, on peut remplacer $c_5$ par : 
$$c_{5,t}=[E_\Gamma:K]\cdot\left(p^e\cdot\left(\frac{d}2\cdot(2g-1+f_{\mathcal X/\mathcal C})+p^{2d}\right)+d\cdot2^{4d^2}\right),$$
et $c_6$ par : 
$$c_{6,t}=2d\cdot q^{c_{7,t}+p\cdot c_{5,t}},\quad\text{o\`u}\quad c_{7,t}=[E_\Gamma:K]\cdot(2g-1).$$
Dans ce cas la borne sup\'erieure pour le cardinal de $X\cap\Gamma$ ne d\'epend plus de $g_\Gamma$, mais juste de $[E_\Gamma:K]$.
\end{remark}

\begin{remark}
Pour retrouver en corollaire le r\'esultat de \cite{PaPa} il suffit de sp\'ecialiser le groupe $\Gamma=J(K)$. On trouve alors bien s\^ur $[E_\Gamma:K]=1$.
\end{remark}

\section{Comparaison avec le cas des corps de nombres}

Lorsqu'on s'int\'eresse au cas des corps de nombres, on dispose d'un \'enonc\'e g\'en\'eral d\^u \`a R\'emond.

\begin{theorem}[R\'emond]\label{rem} \cite[Th\'eor\`eme 1.2]{Rem}
Soient $A$ une vari\'et\'e ab\'e\-lienne sur $\overline{\mathbb{Q}}$ de dimension $d$ et $\mathcal{L}$ un faisceau sym\'etrique et ample sur $A$. Soient $X$ un sous-sch\'ema ferm\'e de $A$ de dimension $m$ et $\Gamma$ un sous-groupe de rang $r\in{\mathbb{N}}$ de $A(\overline{\mathbb{Q}})$. Il existe une constante $c(A,\mathcal{L})>0$ (d\'ependant de la hauteur de $A$), un entier naturel $S$, des \'el\'ements $x_1, ..., x_S$ de $X(\overline{\mathbb{Q}})\cap \Gamma$ et des sous-vari\'et\'es ab\'eliennes $B_1, ..., B_S$ de $A$ de sorte que $x_i+B_i\subset X$ si $1\leq i \leq S$ et $$X(\overline{\mathbb{Q}})\cap \Gamma=\bigcup_{i=1}^S(x_i+B_i)(\overline{\mathbb{Q}})\cap\Gamma,$$
avec $$S\leq (c(A,\mathcal{L})\cdot\deg_{\mathcal{L}}X)^{(r+1)d^{5(m+1)^2}}.$$
\end{theorem}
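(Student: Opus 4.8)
Le plan est de rendre effectif le th\'eor\`eme structurel de Mordell-Lang (d\'ej\`a connu qualitativement par la m\'ethode de Faltings-Vojta) en suivant la strat\'egie de comptage de R\'emond. On munit l'espace vectoriel $V=\Gamma\otimes_{\mathbb{Z}}\mathbb{R}$, de dimension $r$, de la norme euclidienne $\|\cdot\|$ issue de la hauteur normalis\'ee $\hat h_{\mathcal L}$ associ\'ee \`a $\mathcal L$ : comme $\mathcal L$ est sym\'etrique et ample, $\hat h_{\mathcal L}$ fournit une forme quadratique d\'efinie positive sur $V$, ce qui permet de parler de \emph{hauteur} et de \emph{direction} d'un point de $\Gamma$. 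On se ram\`ene d'abord \`a compter les points \emph{isol\'es} de $X\cap\Gamma$, c'est-\`a-dire ceux n'appartenant \`a aucun translat\'e de sous-vari\'et\'e ab\'elienne de dimension positive contenu dans $X$ ; les translat\'es de dimension positive $x_i+B_i$ s'obtiennent ensuite par r\'ecurrence sur $\dim X$, en quotientant par le stabilisateur de $X$ et en appliquant le comptage \`a l'image dans la vari\'et\'e quotient.

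Les deux in\'egalit\'es centrales sont les suivantes. D'une part, \emph{l'in\'egalit\'e de Mumford} (principe des \'ecarts) : si deux points $P,Q$ de $X\cap\Gamma$ pointent dans une m\^eme direction \`a un petit angle pr\`es, leurs hauteurs croissent g\'eom\'etriquement, de sorte que dans un c\^one angulaire fix\'e le nombre de points de hauteur born\'ee reste contr\^ol\'e. D'autre part, \emph{l'in\'egalit\'e de Vojta}, sous la forme de Faltings, qui est l'ingr\'edient arithm\'etique profond : en travaillant sur une puissance convenable $A^n$ et en utilisant le th\'eor\`eme du produit de Faltings ainsi que la g\'eom\'etrie d'Arakelov (contr\^ole des hauteurs de sections auxiliaires, lemmes de z\'eros), elle interdit l'existence de deux points \`a la fois align\'es, de grande hauteur, et de hauteurs comparables. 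La combinaison des deux montre qu'une suite de points de $X\cap\Gamma$ de directions voisines est n\'ecessairement courte.

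On conclut par un argument de \emph{recouvrement} dans $V\cong\mathbb{R}^r$ : on recouvre la sph\`ere unit\'e par des calottes d'ouverture angulaire fix\'ee, dont le nombre cro\^it comme $C^r$ pour une constante $C$ ; chaque calotte d\'etermine un c\^one, et les deux in\'egalit\'es pr\'ec\'edentes bornent le nombre de points isol\'es dans chacun d'eux. En multipliant le nombre de c\^ones par le nombre de points par c\^one, et en suivant la d\'ependance des constantes en le degr\'e $\deg_{\mathcal L}X$ (qui intervient via les lemmes de z\'eros et les estimations arithm\'etiques sur $A^n$) ainsi qu'en la dimension $m$ (qui fixe la puissance $n$ de l'ordre de $m+1$ \`a employer), on obtient une borne du type annonc\'e $S\le(c(A,\mathcal L)\cdot\deg_{\mathcal L}X)^{(r+1)d^{5(m+1)^2}}$, l'exposant $r+1$ provenant du recouvrement sph\'erique et le facteur $d^{5(m+1)^2}$ de l'optimisation des param\`etres dans l'in\'egalit\'e de Vojta.

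Le point d\'elicat sera l'in\'egalit\'e de Vojta effective : en tirer des constantes explicites exige un contr\^ole fin de la g\'eom\'etrie d'Arakelov sur $A^n$ (hauteurs de sous-vari\'et\'es, th\'eor\`eme du produit avec bornes num\'eriques) et un suivi minutieux de la d\'ependance en $\deg_{\mathcal L}X$ et en $n$. Le passage du comptage des points isol\'es \`a l'\'enonc\'e structurel complet, via la r\'ecurrence sur la dimension et le contr\^ole du stabilisateur, demandera par ailleurs de v\'erifier que les constantes restent du type voulu \`a chaque \'etage de la descente.
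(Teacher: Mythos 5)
Premier point : le papier ne d\'emontre pas cet \'enonc\'e. Il s'agit du th\'eor\`eme 1.2 de R\'emond, cit\'e tel quel depuis \cite{Rem} dans la section de comparaison avec le cas des corps de nombres ; il n'y a donc pas de preuve interne \`a laquelle confronter votre tentative. Cela \'etant, votre esquisse reproduit fid\`element l'architecture de la d\'emonstration originale de R\'emond : r\'eduction aux points isol\'es et r\'ecurrence sur la dimension via le stabilisateur, in\'egalit\'e de Mumford (principe des \'ecarts) dans un c\^one, in\'egalit\'e de Vojta effective \`a la Faltings sur une puissance convenable $A^n$, puis recouvrement de la sph\`ere unit\'e de $\Gamma\otimes_{\mathbb Z}\mathbb R$ par un nombre de calottes exponentiel en $r$ ; l'attribution de l'exposant $r+1$ au recouvrement sph\'erique et du facteur $d^{5(m+1)^2}$ \`a l'optimisation des param\`etres dans Vojta est correcte dans les grandes lignes.

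En tant que preuve, toutefois, votre texte comporte une lacune r\'eelle que vous signalez d'ailleurs vous-m\^eme : tout le contenu quantitatif du th\'eor\`eme --- la forme explicite de la borne sur $S$, sa d\'ependance polynomiale en $\deg_{\mathcal L}X$ et en la hauteur de $A$ \`a travers $c(A,\mathcal L)$ --- repose sur les versions \emph{effectives} des in\'egalit\'es de Vojta et de Mumford, qui constituent pr\'ecis\'ement la partie longue et difficile de \cite{Rem} (lemmes de z\'eros explicites, th\'eor\`eme du produit avec constantes num\'eriques, contr\^ole arakelovien des hauteurs sur $A^n$). Sans ces \'enonc\'es effectifs, l'argument que vous d\'ecrivez ne livre que la finitude qualitative d\'ej\`a connue par Faltings--Vojta, et non la majoration annonc\'ee. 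Il faudrait \'egalement v\'erifier, \`a chaque \'etage de la r\'ecurrence sur la dimension, que le passage au quotient par le stabilisateur pr\'eserve la forme de la borne (degr\'e de l'image, hauteur de la vari\'et\'e quotient), ce que vous mentionnez sans l'\'etablir. Votre texte est donc un plan de preuve exact, mais pas une preuve.
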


Si on sp\'ecialise dans le cas des courbes $X$ de genre $d\geq 2$ on a $m=1$ et les $B_i$ sont toutes triviales car $X$ n'est pas une courbe elliptique. Notons par $J$ la vari\'et\'e jacobienne de $X$.  On obtient ainsi 
$$S\leq (c_1(J,\mathcal{L}))^{(r+1)d^{20}},$$ 
o\`u $c_1(J,\mathcal{L})$ d\'epend de mani\`ere polynomiale de la hauteur de la vari\'et\'e $J$, comme explicit\'e dans l'article \cite{DavPhi} page 641 par les th\'eor\`emes 1.3 et 1.4.

Comparons ce r\'esultat avec la situation o\`u $K$ est un corps de fonctions en une variable sur un corps fini et $\Gamma$ un groupe de rang fini $r$. Alors le groupe quotient $\Gamma/p\Gamma$ est en particulier de cardinal fini et on gagne de plus un contr\^ole plus explicite sur son cardinal $\#(\Gamma/p\Gamma)$. En effet ce nombre est au plus \'egal \`a $p^{r+d}$, o\`u $d\ge2$ est le genre de $X$. Dans le cas des corps de fonctions, la borne ne d\'epend donc pas de la hauteur de la vari\'et\'e jacobienne, mais uniquement du conducteur de la courbe et des invariants classiques que sont les degr\'es de corps, genres de courbes et degr\'e d'ins\'eparabilit\'e.

La d\'ependance en la hauteur diff\'erentielle de $J$ surgit pourtant dans la preuve au niveau de la borne lorsqu'on veut consid\'erer des courbes $X$ qui peuvent \^etre d\'efinies sur $K^{p^e}$ pour un entier $e\ge1$. Dans ce cas l\`a le r\'esultat \cite[Theorem 5.3]{HiPa} nous permet de la remplacer par le conducteur de $J/K$ et d'apr\`es \cite[Proposition 2.8]{PaPa} ce nombre est au plus le conducteur d'un mod\`ele $\mathcal X/\mathcal C$ de $X/K$ sur $\mathcal C$.

Un point important cependant : la borne de R\'emond ne fait pas intervenir le corps de rationalit\'e des points d'intersection, elle est donc plus fine en ce sens.

Si on se concentre sur le cas $\Gamma=J(K)$, o\`u $K$ est un corps de nombres, on s'attend en fait \`a un r\'esultat plus fort. La conjecture suivante, dont on peut trouver une justification dans l'article \cite{TdD} va dans ce sens.

\begin{conjecture}
Soit $d\geq 2$ et $D\geq 1$ deux entiers naturels. Alors il existe une constante $c(d,D)>0$ telle que pour tout corps de nombres $K$ de degr\'e $D$, pour toute courbe $X/K$ de genre $d\geq 2$, on a la majoration suivante
$$\#X(K)\leq c(d,D)^{1+\mathrm{rang}(J(K))}.$$
\end{conjecture}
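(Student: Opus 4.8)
Le plan est d'am\'eliorer le th\'eor\`eme de R\'emond (Th\'eor\`eme~\ref{rem}), dont la sp\'ecialisation aux courbes fournit d\'ej\`a une borne de la forme $\#(X\cap\Gamma)\le c_1(J,\mathcal L)^{(r+1)d^{20}}$ avec $r=\mathrm{rang}(J(K))$, en supprimant la d\'ependance de la constante en la hauteur de $J$ et en ramenant l'exposant \`a $1+r$. Fixons un faisceau sym\'etrique ample $\mathcal L$ sur $J$ et un plongement $\imath:X\hookrightarrow J$, et munissons $J(\overline{\mathbb Q})\otimes\mathbb R\cong\mathbb R^{2d}$ du produit scalaire issu de la hauteur de N\'eron--Tate $\hat h_{\mathcal L}$. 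La satur\'ee de $J(K)$ devient un r\'eseau de rang $r$, et on partage $X(K)=X(\overline{\mathbb Q})\cap J(K)$ en points de petite et de grande hauteur canonique relativement \`a un seuil \`a optimiser.

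Pour les points de \textbf{grande} hauteur, l'id\'ee est un principe de lacune de Vojta--Mumford uniforme : il existe $\epsilon=\epsilon(d)>0$ et $\kappa=\kappa(d)$ tels que deux points distincts $P,Q\in X(K)$ v\'erifiant $\kappa\le\hat h_{\mathcal L}(P)\le\hat h_{\mathcal L}(Q)\le 2\hat h_{\mathcal L}(P)$ satisfont l'in\'egalit\'e d'angle $\langle P,Q\rangle\le(1-\epsilon)\,\|P\|\,\|Q\|$ dans $\mathbb R^{2d}$. Un argument de recouvrement dans le r\'eseau $J(K)$, sur des couronnes de hauteur dyadiques, borne alors le nombre de tels points par $c(d)^{1+r}$, sans d\'ependance en $X$ une fois $\hat h_{\mathcal L}$ pris comme r\'ef\'erence. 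Le c\oe ur de l'affaire est que les constantes ne d\'ependent que de $d$, via la quasi-\'equivalence entre la restriction de $\hat h_{\mathcal L}$ \`a $X$ et une hauteur d'Arakelov intrins\`eque sur $X$.

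La moiti\'e difficile concerne les points de \textbf{petite} hauteur, ceux dont $\hat h_{\mathcal L}(P)$ est sous le seuil. Borner leur nombre \emph{uniform\'ement en} $J$ \'equivaut \`a un \'enonc\'e de type Bogomolov relatif : en dehors d'un nombre fini de sous-vari\'et\'es exceptionnelles, un point non de torsion de $X$ devrait avoir une hauteur canonique minor\'ee par une constante strictement positive ne d\'ependant que de $d$ et de $D=[K:\mathbb Q]$, et en particulier pas de la hauteur de $J$. Le plan est d'introduire l'application de Betti $b:\mathcal X^{\mathrm{an}}\to(\mathbb R/\mathbb Z)^{2d}$ de la courbe universelle au-dessus de l'espace de modules (compactifi\'e), de montrer que sa restriction \`a une fibre non isotriviale est de rang r\'eel maximal, et d'en d\'eduire une in\'egalit\'e d'\'equidistribution et de hauteur sur la famille. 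L'obstacle principal est exactement cette minoration pour les petits points : elle repose sur l'apport de transcendance fonctionnelle (le th\'eor\`eme d'Ax--Schanuel pour la vari\'et\'e ab\'elienne universelle) contr\^olant le feuilletage de Betti, conjugu\'e \`a un argument de d\'eg\'en\'erescence for\c{c}ant la constante obtenue \`a ne d\'ependre que de $d$ et $D$. C'est pr\'ecis\'ement ce m\'ecanisme qui supprime la hauteur de $J$ apparaissant dans le $c_1(J,\mathcal L)$ de R\'emond.

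En combinant les deux r\'egimes et en optimisant le seuil de hauteur, on obtient $\#X(K)\le c(d,D)^{1+r}$, la constante $c(d,D)$ absorbant toutes les constantes uniformes ci-dessus ; la sp\'ecialisation $\Gamma=J(K)$ redonne l'\'enonc\'e de la conjecture.
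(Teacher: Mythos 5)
Premier point essentiel : l'\'enonc\'e que vous traitez est pr\'esent\'e dans l'article comme une \emph{conjecture}, sans d\'emonstration. Le texte se borne \`a renvoyer \`a \cite{TdD} pour une justification et \`a \cite{Paz13} pour une famille particuli\`ere de courbes de genre $2$ \`a bonne r\'eduction potentielle ; il n'existe donc aucune preuve dans l'article \`a laquelle comparer votre tentative, et tout argument complet constituerait un r\'esultat nouveau, pas une variante de celui du papier.

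Second point : votre texte n'est pas une d\'emonstration mais un programme, et ses deux piliers sont pr\'ecis\'ement les \'enonc\'es qu'il faudrait \'etablir. Pour les points de grande hauteur, l'in\'egalit\'e d'angle uniforme avec $\epsilon=\epsilon(d)$ et $\kappa=\kappa(d)$ n'est pas prouv\'ee : la version classique de l'in\'egalit\'e de Mumford--Vojta fait intervenir la hauteur de $X$ plong\'ee dans $J$ (c'est exactement la source de la d\'ependance en la hauteur dans le $c_1(J,\mathcal L)$ de R\'emond), et la quasi-\'equivalence de hauteurs que vous invoquez pour l'\'eliminer est affirm\'ee sans argument ni r\'ef\'erence. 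Pour les points de petite hauteur, vous reconnaissez vous-m\^eme que la minoration de type Bogomolov, uniforme en la hauteur de $J$ et ne d\'ependant que de $d$ et $D$, est l'obstacle principal ; l'invoquer via l'application de Betti et un th\'eor\`eme d'Ax--Schanuel, sans \'etablir la non-d\'eg\'en\'erescence de cette application sur la fibre consid\'er\'ee ni l'\'enonc\'e d'\'equidistribution qui devrait en d\'ecouler, revient \`a supposer ce qu'on veut d\'emontrer. Le plan est raisonnable --- c'est d'ailleurs la strat\'egie qui a permis depuis de d\'emontrer des \'enonc\'es de ce type --- mais au niveau de d\'etail fourni, les deux \'etapes clefs restent des assertions, de sorte que le texte ne constitue pas une preuve de la conjecture.
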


Il n'y aurait alors plus de d\'e\-pen\-dan\-ce en la hauteur de la jacobienne. On trouvera dans \cite{Paz13} une telle borne d\'emontr\'ee pour une famille de courbes de genre $2$ ayant potentiellement bonne r\'eduction partout. La d\'epen\-dance en le rang de la jacobienne peut au besoin \^etre troqu\'ee contre une d\'ependance en le conducteur de la jacobienne (ou m\^eme de la courbe sous-jacente) en utilisant par exemple le th\'eor\`eme 5.1 de \cite{rem10} page 775 qui majore le rang d'une vari\'et\'e ab\'elienne par son conducteur. 

Notons qu'il existe une conjecture plus audacieuse sur l'existence d'une borne sur le nombre de points rationnels $\#X(K)$ dans laquelle le rang de la jacobienne de $X$ n'est plus pr\'esent dans le majorant, voir \`a ce sujet l'article \cite{CaHaMa}.

\end{document}